\theoremstyle{plain}
\newtheorem{theorem}                {Theorem}      [section]
\newtheorem*{theorem*}                {Theorem \ref{thm:appl}}
\newtheorem{proposition}  [theorem]  {Proposition}
\newtheorem{corollary}    [theorem]  {Corollary}
\newtheorem{lemma}        [theorem]  {Lemma}
\theoremstyle{definition}
\newtheorem{remark}       [theorem]  {Remark}
\numberwithin{equation}{section}
\DeclareMathOperator{\trace}{trace}
\DeclareMathOperator{\grad}{grad}
\DeclareMathOperator{\Span}{span}
\DeclareMathOperator{\Imag}{Im}
\begin{document}

\title[Biconservative surfaces in the $4$-dimensional hyperbolic space]
{Intrinsic characterizations of biconservative surfaces in the $4$-dimensional hyperbolic space}

\author{Simona Nistor, Mihaela Rusu}

\address{Faculty of Mathematics, Al. I. Cuza University of Iasi,
Blvd. Carol I, 11 \\ 700506 Iasi, Romania} \email{nistor.simona@ymail.com}

\address{Faculty of Mathematics, Al. I. Cuza University of Iasi,
Blvd. Carol I, 11 \\ 700506 Iasi, Romania} \email{mihaelarussu10@yahoo.com}

\begin{abstract}
In this paper, we extend the investigation of biconservative surfaces with parallel normalized mean curvature vector fields ($PNMC$) in the $4$-dimensional space forms, focusing on the hyperbolic space $\mathbb{H}^4$, the last remaining case to explore. We establish that an abstract surface admits a $PNMC$ biconservative immersion in $\mathbb{H}^4$ if and only if it satisfies a certain intrinsic condition; if such an immersion exists, it is unique. We further analyze these abstract surfaces, showing that they form a two-parameter family. Additionally, we provide three characterizations of the intrinsic condition to explore the geometric properties of these surfaces.
\end{abstract}

\keywords{Biconservative surfaces; Riemannian space-forms; parallel normalized mean curvature vector field}

\subjclass[2010]{53C42 (Primary); 53B25.}

\maketitle

\section{Introduction}
The study of submanifolds has been a central topic in differential geometry, motivated by the necessity to understand the intrinsic and extrinsic properties of manifolds immersed in higher-dimensional spaces. Among the various types of submanifolds, biharmonic and biconservative submanifolds have gained significant attention due to their intriguing properties.

Biharmonic submanifolds generalize the concept of well-known minimal submanifolds and they are isometric immersions $\varphi:\left(M^m,g\right)\to\left(N^n,h\right)$ satisfying the biharmonic equation
$$
\tau_2(\varphi)=-\Delta^\varphi \tau(\varphi)-\trace R^N\left(\varphi_\ast,\tau\left(\varphi\right)\right)\varphi_\ast=0.
$$
Here, $\Delta^\varphi$ is the rough Laplacian acting on sections of the pull-back bundle $\varphi^{-1}\left(TN^n\right)$, $R^N$ is the curvature tensor field on $N^n$, and 
$$
\tau(\varphi)=mH
$$ 
is the tension field associated to $\varphi$, where $H$ is the mean curvature vector field.

The bitension field $\tau_2(\varphi)$ has a tangent and a normal part, and the biconservative submanifolds are characterized by the vanishing of its tangent part. For more details about the geometric meaning of the equation $\tau_2(\varphi)^\top=0$, see for example \cite{CMOP, Jiang87, LMO, NPhD17}.

The investigation of biconservative submanifolds has led to numerous results characterizing these submanifolds in various ambient spaces, including Euclidean spaces, Euclidean spheres, and hyperbolic spaces. Recent research has focused on classifying these submanifolds under specific geometric conditions. 

Naturally, the first step on studying biconservative submanifolds was to investigate the properties of biconservative hypersurfaces in space forms, i.e., spaces with sectional curvature $\varepsilon$, and to classify them, when it is possible. For the first paper with this topic see \cite{Hasanis-Vlachos}. In space forms, the biconservative hypersurfaces are characterized by
$$
A(\grad f)=-\frac{m}{2}f\grad f,
$$
where $A$ is the shape operator and $f=(\trace A)/ m$ is the mean curvature function. The constant mean curvature hypersurfaces, i.e., $CMC$ hypersurfaces, are trivially biconservative, so the interesting case is the study of non-$CMC$ biconservative hypersurfaces. For a recent survey on this topic see \cite{FO2022}.

A next step could be represented by the study of biconservative submanifolds of codimension $2$, and the simplest case is the study of biconservative surfaces in $4$-dimensional space forms $N^4(\varepsilon)$. In this context, the surfaces with parallel mean curvature vector fields, i.e., $PMC$ surfaces, are trivially biconservative, so the interesting case is the study of non-$PMC$ biconservative surfaces. 

The $CMC$ biconservative surfaces in $N^4(\varepsilon)$ were classified in \cite{MOR2016JGA}. The natural following step is to examine non-$CMC$ biconservative surfaces in $N^4(\varepsilon)$. However, classifying these surfaces without additional assumptions seems to appear quite difficult. A particularly helpful condition for our study is to require that the surfaces have parallel normalized mean curvature vector field ($PNMC$). 

The (non-$CMC$) $PNMC$ biconservative surfaces in $N^4(\varepsilon)$, with $\varepsilon=0$ and $\varepsilon=1$ where studied in \cite{YeginTurgay2018} and \cite{NOTYS}, respectively. In this paper, we aim to advance the study of biconservative surfaces in $4$-dimensional space forms by considering $\varepsilon=-1$, i.e., the hyperbolic space, and exploring their fundamental properties, providing new classifications, and offering geometric characterizations that shed light on their intrinsic geometry. 

In our work, we slightly modify the technique used in \cite{NOTYS, YeginTurgay2018}. Inspired by \cite{Hasanis-Vlachos}, we construct the local charts in a more geometric way, using the flow of the unit vector field $(\grad f) /|\grad f|$, where $f$ is the mean curvature function. We also altered the overall approach for finding the family of abstract surfaces that can admit a $PNMC$ biconservative immersion. Furthermore, some intrinsic characterizations of $PNMC$ biconservative surfaces in the hyperbolic space $\mathbb{H}^4$ are presented. A crucial aspect of the intrinsic approach involves considering the Gauss equation as a cubic polynomial equation in $f$ which is dependent on a certain constant $c$. This constant $c$ cannot be seen as a parameter, as it belongs to the intrinsic geometry of the surface.

The paper is organized like follows. In Section \ref{preliminaries} we recall the fundamental equations of submanifolds and introduce some notations that are used throughout the paper. In Section \ref{sec-intrinsic} we begin with some properties of $PNMC$ biconservative surfaces in $\mathbb{H}^4$. These are included in Theorem \ref{thm:fundamentalProperties}. More precisely, here are described the Levi-Civita connection on the domain manifold, the shape operators, the link between the Gaussian curvature $K$ and the mean curvature function $f$, the equation that is satisfied by $f$ and two equivalent expression for the domain metric $g$. The next main result, Proposition \ref{prop-isometry}, states that there exists a two-parametric family of abstract surfaces $\left(M^2,g\right)$ with $g$ given in Theorem \ref{thm:fundamentalProperties}. Further, in Theorem \ref{first-charact-theorem-intrinsic}, we prove that an abstract surface $\left(M^2,g\right)$ can admit a $PNMC$ biconservative immersion in $\mathbb{H}^4$ if and only if $g$ is as described in Theorem \ref{thm:fundamentalProperties}. Additionally, the uniqueness of such immersions was established in Theorem \ref{thm:uniquenessOfImmersions}. Therefore, dermining all $PNMC$ biconservative immersions in $\mathbb{H}^4$ is equivalent to classifying all abstract surfaces $\left(M^2,g\right)$ with $g$ given in Theorem \ref{thm:fundamentalProperties}. Now, even though the metric $g$ has an explicit analytic expression, we believe that offering some geometric characterizations will lead to a deeper understanding of these metrics. One of these characterizations states that such metrics are defined by the property that the level curves of $K$ are circles with a specific curvature, as detailed in Theorem  \ref{second-charact-bicons-surfaces-intrinsic}.  

\textbf{Conventions and notations.} All Riemannian metrics are indicated, in general, by the same symbol $\langle\cdot,\cdot\rangle$. Sometimes, when there is no confusion, we omit to indicate the metric. We assume that the manifolds are connected and oriented and, for the rough Laplacian acting on sections of the pull-back bundle $\varphi^{-1}\left(TN^n\right)$ and for the curvature tensor field, the following sign conventions are used:
$$
\Delta^{\varphi}=-\trace\left(\nabla^{\varphi}\nabla^{\varphi}-\nabla^{\varphi}_{\nabla}\right)
$$
and
$$
R(X,Y)Z=[\nabla_X,\nabla_Y]Z-\nabla_{[X,Y]}Z,
$$
respectively. Here, $\varphi:M^m\to N^n$ is a smooth map between two Riemannian manifolds, $\nabla^\varphi$ denotes the induced connection on  $\varphi^{-1}\left(TN^n\right)$ and  $\nabla$ is the Levi-Civita connection on $M^m$. 

Essentially, our approach is of local nature. In order to avoid trivial cases for our study of non-$CMC$ biconservative surfaces $M^2$ in $\mathbb{H}^4$, we assume that the mean curvature function of the surface is positive, its gradient is different to zero at any point, and $M^2$ is completely contained in $\mathbb{H}^4$, i.e., any open subset of $M^2$ cannot lie in any totally geodesic $\mathbb{H}^3\subset\mathbb{H}^4$.

\section{Preliminaries}\label{preliminaries}
Let $\varphi:\left(M^m,g\right)\to \left(N^n,h\right)$ be an isometric immersion or, simply, let $M^{m}$ be a submanifold in $N^{n}$. We have the standard decomposition of $\varphi^{-1}\left(TN^n\right)$ into the direct sum of the tangent bundle $TM^m\equiv \varphi_\ast \left(TM^m\right)$ of $M^m$ and the normal bundle 
$$
NM^m=\displaystyle{\bigcup_{p\in M} \left(\varphi_\ast \left(T_{p}M^m\right)\right)^\perp}
$$ 
of $M^m$ in $N^n$. 

Since we work locally, $M^m$ is identified with its image by $\varphi$, a vector field $X$ tangent to $M^m$ becomes a vector field tangent to $N^n$ along $\varphi(M)\equiv M$, and $\nabla^\varphi_X \varphi_\ast(Y)$ is now identified with $\nabla^N_X Y$, where $\nabla^N$ is the Levi-Civita connection on $N^n$. Next, we recall the Gauss and the Weingarten formulas
\begin{equation*}
	\nabla^N_X Y=\nabla_X Y+B(X,Y),
\end{equation*}
and
\begin{equation*}
	\nabla^N_X \eta=-A_\eta(X)+\nabla^\perp_X\eta,
\end{equation*}
where $B\in C\left(\odot^2 T^\ast M^m\otimes NM^m\right)$ is called the second fundamental form of $M^m$ in $N^n$, $A_\eta\in C\left(T^\ast M^m\otimes TM^m\right)$ is the shape operator of $M^m$ in $N^n$ in the normal direction $\eta$, and $\nabla^\perp$ is the induced connection in the normal bundle.

In the particular cases when $N^n=\mathbb{H}^4$ or $N^n=\mathbb{R}^5$, we denote the corresponding Levi-Civita connections by $\tilde{\nabla}$ or $\hat{\nabla}$, respectively.

The mean curvature vector field of $M^m$ in $N^n$ is defined by $H=(\trace B)/m\in C\left(NM^m\right)$, where the $\trace$ is considered with respect to the domain metric $g$. The mean curvature function is defined by $f=|H|$.

In this paper, we assume that $H\neq 0$ at any point, so $f$ is a smooth positive function on $M^m$. We denote $E_{m+1}=H/f$ and $A_{m+1}=A_{E_{m+1}}$. Also, a local orthonormal frame field in the normal bundle $NM^m$ of $M^m$ in $N^n$ is indicated by $\left\{E_{m+1},\ldots, E_n\right\}$. 
 
We recall now the fundamental equations of submanifolds, i.e.,  the \textit{Gauss, Codazzi and Ricci equations}, that we will use later in this paper:

\begin{equation}\label{Gauss-equation}
\langle R^N(X,Y)Z,W\rangle=\langle R(X,Y)Z,W\rangle-\langle B(X,W),B(Y,Z)\rangle+\langle B(Y,W),B(X,Z)\rangle,
\end{equation}

\begin{equation}\label{Codazzi-equation}
	\left(\nabla_X A_\eta\right)(Y)-\left(\nabla_Y A_\eta\right)(X)=A_{\nabla_X^\perp \eta}(Y)-A_{\nabla_Y^\perp \eta}(X)-\left(R^N(X,Y)\eta\right)^\top,
\end{equation}
and 
\begin{equation}\label{Ricci-equation}
	\left(R^N(X,Y)\eta\right)^\perp=R^\perp(X,Y)\eta+B\left(A_\eta(X),Y\right)-B\left(X,A_\eta(Y)\right),
\end{equation} 
where $X, Y, Z, W\in C\left(TM^m\right)$ and $\eta\in C\left(NM^m\right)$.

Concerning the biconservative submanifolds, we only mention here some characterization formulas.

\begin{proposition}
Let $\varphi:\left(M^m,g\right)\to \left(N^n,h\right)$ be a submanifold. Then, the following conditions are equivalent:
	
\begin{itemize}
\item [(i)] $M^m$ is biconservative;
\item [(ii)] $\trace A_{\nabla^\perp_{\cdot} H}(\cdot)+\trace \nabla A_H +\trace \left( R^N(\cdot,H)\cdot\right)^\top=0$;
\item [(iii)] $\frac{m}{2}\grad\left(|H|^2\right)+2\trace A_{\nabla^\perp_{\cdot} H}(\cdot) + 2\trace \left(R^N(\cdot,H)\cdot\right)^\top=0$;
\item [(iv)] $2\trace \nabla A_H-\frac{m}{2}\grad\left(|H|^2\right)=0$.
	
\end{itemize}	
\end{proposition}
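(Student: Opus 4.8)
The plan is to compute the tangential part of the bitension field directly from its definition, and then to establish a single algebraic identity relating the three traces occurring in (ii)--(iv); the four conditions will then be seen to be equivalent by substituting this identity.

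First I would prove the equivalence (i) $\Leftrightarrow$ (ii). Since $\tau(\varphi)=mH$, one has $\tau_2(\varphi)=-m\Delta^\varphi H-m\trace R^N(\varphi_\ast,H)\varphi_\ast$, so $\tau_2(\varphi)^\top=0$ is equivalent to $\left(\Delta^\varphi H\right)^\top+\trace\left(R^N(\cdot,H)\cdot\right)^\top=0$. Fixing $p\in M^m$ and a local orthonormal frame $\{E_i\}$ that is geodesic at $p$ (so $\left(\nabla_{E_i}E_j\right)(p)=0$), we get $\Delta^\varphi H=-\sum_i\nabla^\varphi_{E_i}\nabla^\varphi_{E_i}H$ at $p$. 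I would then write $\nabla^\varphi_{E_i}H=\nabla^N_{E_i}H=-A_H(E_i)+\nabla^\perp_{E_i}H$ by the Weingarten formula, apply the Gauss formula to the tangential term $-A_H(E_i)$ and the Weingarten formula again to the normal term $\nabla^\perp_{E_i}H$, and keep only the tangential components. Since $A_H\left(\nabla_{E_i}E_i\right)$ vanishes at $p$, this turns $\sum_i\nabla_{E_i}\left(A_H(E_i)\right)$ into $\trace\nabla A_H$, and we obtain $\left(\Delta^\varphi H\right)^\top=\trace\nabla A_H+\trace A_{\nabla^\perp_\cdot H}(\cdot)$; this immediately yields (i) $\Leftrightarrow$ (ii).

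Next I would establish the identity
\begin{equation*}
\trace\nabla A_H=\frac{m}{2}\grad\left(|H|^2\right)+\trace A_{\nabla^\perp_\cdot H}(\cdot)+\trace\left(R^N(\cdot,H)\cdot\right)^\top.
\end{equation*}
To prove it, fix a tangent vector $X$ and compute $\langle\trace\nabla A_H,X\rangle=\sum_i\langle\left(\nabla_{E_i}A_H\right)(E_i),X\rangle$. Using that $A_H$ and each $\nabla_{E_i}A_H$ are self-adjoint, I would rewrite the summand as $\langle\left(\nabla_{E_i}A_H\right)(X),E_i\rangle$ and apply the Codazzi equation \eqref{Codazzi-equation} with $\eta=H$ to interchange the roles of $E_i$ and $X$. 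The four resulting sums are identified as follows: $\sum_i\langle\left(\nabla_X A_H\right)(E_i),E_i\rangle=X(\trace A_H)=mX\left(|H|^2\right)$ because $\trace A_H=m|H|^2$; $\sum_i\langle A_{\nabla^\perp_{E_i}H}(X),E_i\rangle=\langle\trace A_{\nabla^\perp_\cdot H}(\cdot),X\rangle$; $\sum_i\langle A_{\nabla^\perp_X H}(E_i),E_i\rangle=\trace A_{\nabla^\perp_X H}=m\langle H,\nabla^\perp_X H\rangle=\frac{m}{2}X\left(|H|^2\right)$; and, using the pair and skew symmetries of $R^N$, $\sum_i\langle R^N(E_i,X)H,E_i\rangle=-\langle\trace\left(R^N(\cdot,H)\cdot\right)^\top,X\rangle$. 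Collecting these terms gives the identity.

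Finally, substituting this identity into (ii) in two different ways completes the proof: eliminating $\trace\nabla A_H$ produces $\frac{m}{2}\grad\left(|H|^2\right)+2\trace A_{\nabla^\perp_\cdot H}(\cdot)+2\trace\left(R^N(\cdot,H)\cdot\right)^\top=0$, which is (iii), while eliminating $\trace A_{\nabla^\perp_\cdot H}(\cdot)+\trace\left(R^N(\cdot,H)\cdot\right)^\top$ produces $2\trace\nabla A_H-\frac{m}{2}\grad\left(|H|^2\right)=0$, which is (iv); each step is reversible. I expect the main obstacle to be the bookkeeping in the derivation of the displayed identity: one has to be careful with the tangential projections and, especially, with the curvature-tensor symmetries, since a single sign slip there would destroy the equivalence with (iii) and (iv). By contrast, the computation of $\left(\Delta^\varphi H\right)^\top$ is routine once the geodesic frame is in place.
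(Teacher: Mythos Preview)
Your argument is correct: the computation of $\left(\Delta^\varphi H\right)^\top$ in a geodesic frame gives exactly (ii), and the Codazzi-based identity
\[
\trace\nabla A_H=\frac{m}{2}\grad\left(|H|^2\right)+\trace A_{\nabla^\perp_\cdot H}(\cdot)+\trace\left(R^N(\cdot,H)\cdot\right)^\top
\]
is derived cleanly (the four sums are identified correctly, including the curvature term via the pair symmetry $\langle R^N(E_i,X)H,E_i\rangle=\langle R^N(H,E_i)E_i,X\rangle=-\langle R^N(E_i,H)E_i,X\rangle$). Substituting this identity into (ii) yields (iii) and (iv) exactly as you describe.

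There is nothing to compare against, however: the paper states this proposition in the Preliminaries section as a known characterization formula and does not supply a proof. Your write-up is a self-contained verification of a result the authors are quoting from the literature.
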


\section{Intrinsic characterization of $PNMC$ biconservative surfaces}\label{sec-intrinsic}

We study $PNMC$ biconservative surfaces $\varphi:\left(M^2,g\right)\to\mathbb{H}^4$. We recall (see \cite{FLO2021}) that $M^2$ is a $PNMC$ biconservative surface in $\mathbb{H}^4$, i.e.,  $\tau_2^\top(\varphi)=0$, if and only if 
\begin{equation}\label{eq:PNMC-biconservative}
A_{3}(\grad f)=-f\grad f.
\end{equation}

Let us consider
$$
E_1=\frac{\grad f}{|\grad f|} \qquad \text{and} \qquad E_3=\frac{H}{f}.
$$

Because of orientation, we can consider the positively oriented global orthonormal frame fields $\left\{E_1,E_2\right\}$ in the tangent bundle $TM^2$ and $\left\{E_3,E_4\right\}$ in the normal bundle $NM^2$.

Clearly, $E_2f=0$. Denoting $A_4=A_{E_4}$, we have the following intrinsic and extrinsic properties of our surfaces, which mainly follow from \eqref{Gauss-equation}, \eqref{Codazzi-equation} and \eqref{Ricci-equation}.

\begin{theorem}\label{thm:fundamentalProperties}
Let $\varphi:\left(M^2,g\right)\to\mathbb{H}^4$ be a $PNMC$ biconservative surface. Then, the following hold:
\begin{itemize}
\item [(i)] the Levi-Civita connection $\nabla$ of $M^2$ and the normal connection $\nabla^\perp$ of $M^2$ in $\mathbb{H}^4$ are given by
\begin{equation}\label{Levi-Civita-connection-f}
\nabla_{E_1}E_1=\nabla_{E_1}E_2=0, \quad \nabla_{E_2}E_1=-\frac{3}{4}\frac{E_1 f}{f}E_2, \quad \nabla_{E_2}E_2=\frac{3}{4}\frac{E_1 f}{f}E_1
\end{equation}
and
\begin{equation*}\label{normal-connection-f}
\nabla^\perp E_3=0, \qquad \nabla^\perp E_4=0;
\end{equation*}

\item [(ii)] the shape operators corresponding to $E_3$ and $E_4$ are given, with respect to $\left\{E_1, E_2\right\}$, by the matrices
\begin{equation*}\label{shape-operators-A3-A4-f}
A_3=\left(
\begin{array}{cc}
-f & 0 \\
 0 & 3f
\end{array}
\right),\quad
A_4=\left(
\begin{array}{cc}
cf^{3/2} & 0 \\
0 & -cf^{3/2}
\end{array}
\right),
\end{equation*}
where $c$ is a non-zero real constant;

\item [(iii)] the Gaussian curvature $K$ and the mean curvature function $f$ are related by
\begin{equation}\label{relation-K-f}
K=-1-3f^2-c^2f^3,
\end{equation}
thus $1+K<0$ on $M^2$;

\item [(iv)] the mean curvature function $f$ satisfies
\begin{equation}\label{second-order-invariant-f}
f\Delta f+\left|\grad f\right|^2-\frac{4}{3}f^2-4f^4+\frac{4}{3}c^2f^5=0;
\end{equation}

\item [(v)] around any point of $M^2$ there exists a positively oriented local chart $X^f=X^f(u,v)$ such that
$$
\left(f\circ X^f\right)(u,v)=f(u,v)=f(u)
$$
and $f$ satisfies the following second order $ODE$
\begin{equation}\label{second-order-chart-f}
 f''f-\frac{7}{4}\left(f'\right)^2+\frac{4}{3}f^2+4f^4+\frac{4}{3}c^2f^5=0
\end{equation}
and the condition $f'>0$. The first integral of the above second order ODE is
\begin{equation}\label{first-integral-f}
\left(f'\right)^2-\frac{16}{9}f^2+16f^4+\frac{16}{9}c^2f^5-2Cf^{7/2}=0,
\end{equation}
where $C$ is a real constant.

Moreover, the metric $g$ is given by 
\begin{equation}\label{expression-metricg-uv}
g(u,v)=du^2+\frac{1}{f^{3/2}(u)}dv^2.
\end{equation}

\item [(vi)] around any point of $M^2$ there exist positively oriented local coordinates $(f,v)$ such that the metric $g$ can be written as
\begin{equation}\label{g(f,v)}
g(f,v)=\frac{1}{\frac{16}{9}f^2-16f^4-\frac{16}{9}c^2f^5+2Cf^{7/2}}df^2+\frac{1}{f^{3/2}}dv^2,
\end{equation}
with $C\in\mathbb{R}$ and $c\neq 0$.

\end{itemize}
\end{theorem}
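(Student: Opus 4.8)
The plan is to derive all six items from the fundamental equations \eqref{Gauss-equation}, \eqref{Codazzi-equation}, \eqref{Ricci-equation} together with the defining condition \eqref{eq:PNMC-biconservative}, working in the global orthonormal frame $\{E_1,E_2,E_3,E_4\}$ adapted to the surface. First I would set up notation for the connection coefficients: writing $\nabla_{E_i}E_j$ in terms of the frame and using that $E_1$ is proportional to $\grad f$ while $E_2 f=0$, the symmetry of the Hessian of $f$ forces $\nabla_{E_1}E_2$ to have no $E_1$-component, and a short computation with $[E_1,E_2]$ gives $\nabla_{E_1}E_1=\nabla_{E_1}E_2=0$ and $\nabla_{E_2}E_1=-a\,E_2$, $\nabla_{E_2}E_2=a\,E_1$ for some function $a$. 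Next, \eqref{eq:PNMC-biconservative} says $E_1$ is an eigenvector of $A_3$ with eigenvalue $-f$; since $\trace A_3 = 2f$ (because $E_3=H/f$), the other eigenvalue is $3f$, giving the stated matrix for $A_3$. The $PNMC$ hypothesis $\nabla^\perp E_3=0$ then forces $\nabla^\perp E_4=0$ as well (rank-one normal bundle), which is part (i); and feeding $\nabla^\perp E_3=0$ into the Codazzi equation \eqref{Codazzi-equation} for $\eta=E_3$, with $R^{\mathbb{H}^4}$ of constant curvature $-1$ so that $(R^N(X,Y)E_3)^\top=0$, yields a relation that pins down $a = \tfrac34 (E_1 f)/f$ after matching coefficients. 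Then the Ricci equation \eqref{Ricci-equation} for $\eta=E_3$, using $\nabla^\perp=0$, forces $A_3$ and $A_4$ to commute, so $A_4$ is also diagonal in $\{E_1,E_2\}$; the Codazzi equation for $\eta=E_4$ combined with $E_2 f=0$ shows its eigenfunctions are $\pm$ of the same function, and integrating the resulting ODE gives $A_4=\mathrm{diag}(cf^{3/2},-cf^{3/2})$ with $c$ constant, $c\neq0$ (if $c=0$ the surface would lie in a totally geodesic $\mathbb{H}^3$), which is (ii).

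For (iii) I would plug $A_3, A_4$ into the Gauss equation \eqref{Gauss-equation} applied to $(E_1,E_2,E_2,E_1)$: the left side is $\langle R^{\mathbb{H}^4}(E_1,E_2)E_2,E_1\rangle=-1$, the intrinsic term is $K$, and the extrinsic terms contribute $-(-f)(3f)-(cf^{3/2})(-cf^{3/2}) = 3f^2 + c^2 f^3$ with the appropriate signs, giving $K=-1-3f^2-c^2f^3$ and hence $1+K<0$. Part (iv) comes from computing $\Delta f$ intrinsically: since $f=f(E_1$-direction$)$ and the connection coefficients are known, $\Delta f = -E_1E_1 f - \nabla_{E_1}E_1 f - \nabla_{E_2}E_2 f = -E_1 E_1 f + \tfrac34 (E_1 f)^2/f$, so $f\Delta f + |\grad f|^2 = -f\,E_1E_1 f + \tfrac74 (E_1 f)^2$; then I would differentiate the Gauss relation \eqref{relation-K-f} along $E_1$ and use the intrinsic expression for $K$ in terms of the frame (i.e. $K = -E_1 a - a^2$ with $a=\tfrac34 (E_1 f)/f$) to get a second-order expression for $f$, and algebraic manipulation reconciles the two to produce \eqref{second-order-invariant-f}.

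For (v) I would construct the chart geometrically: since $\nabla_{E_1}E_1=0$, the integral curves of $E_1$ are geodesics, so take $u$ to be arclength along the flow of $E_1$ and $v$ a parameter labelling a transversal integral curve of $E_2$ carried along by the flow; then $\partial_u = E_1$, $f=f(u)$, $f' = E_1 f >0$, and \eqref{second-order-invariant-f} becomes the ODE \eqref{second-order-chart-f} after substituting $\Delta f = -f'' - a f' $ and simplifying. For the metric, $g=du^2 + \lambda(u,v)^2 dv^2$ with $\lambda$ determined by $\partial_u \log\lambda = -\langle\nabla_{E_2}E_2,E_1\rangle/|E_2|\cdot(\cdots)$ — more cleanly, $E_2 = \lambda^{-1}\partial_v$ and $\nabla_{E_2}E_2 = a E_1$ translates into $\partial_u \lambda = a\lambda = \tfrac34 (f'/f)\lambda$, so $\lambda = f^{-3/4}$ up to a $v$-dependent factor that can be absorbed into reparametrizing $v$, giving \eqref{expression-metricg-uv}. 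The first integral \eqref{first-integral-f} is obtained in the standard way: multiply \eqref{second-order-chart-f} by an integrating factor — here $f^{-9/2}f'$ works because of the $-\tfrac74$ coefficient — and integrate once. Finally (vi) is just the change of coordinates $u \mapsto f$: from \eqref{first-integral-f}, $du = df/f'$ with $(f')^2 = \tfrac{16}{9}f^2 - 16f^4 - \tfrac{16}{9}c^2 f^5 + 2Cf^{7/2}$, substituted into \eqref{expression-metricg-uv}.

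The main obstacle I anticipate is bookkeeping in the Codazzi and Ricci computations that produce $a=\tfrac34(E_1f)/f$ and the precise power $f^{3/2}$ in $A_4$: one must carefully track which components of \eqref{Codazzi-equation} are nontrivial, use $E_2 f = 0$ and $\nabla^\perp=0$ at the right moments, and correctly handle the constant-curvature term $(R^N(X,Y)\eta)^\top=0$; an error in the connection coefficient propagates into wrong exponents everywhere downstream. The second delicate point is choosing the right integrating factor for \eqref{second-order-chart-f} so that the first integral takes the clean form \eqref{first-integral-f} with a genuine constant $C$ — this is where the fractional powers $f^{7/2}$ and $f^{9/2}$ enter and must be matched exactly.
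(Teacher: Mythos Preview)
Your proposal is correct and follows essentially the same route as the paper: biconservativity plus $\trace A_3=2f$ for the matrix of $A_3$, the Ricci equation for the diagonality of $A_4$, Codazzi for $\eta=E_3$ to obtain the connection forms and then for $\eta=E_4$ to integrate $\lambda=cf^{3/2}$, Gauss for \eqref{relation-K-f}, the structure equation $d\omega^1_2=K\,\omega^1\wedge\omega^2$ for the second-order relation, the flow of $E_1$ for the chart, and the integrating factor $f'/f^{9/2}$ for \eqref{first-integral-f}. One small correction: the Hessian-symmetry/$[E_1,E_2]$ argument you sketch does not by itself force $\omega^1_2(E_1)=0$ (both produce only the single relation $\langle\nabla_{E_1}E_2,E_1\rangle\,E_1f=-E_2(E_1f)$); in the paper this vanishing comes directly from the Codazzi equation for $\eta=E_3$, which yields $\omega^1_2(E_1)=0$ and $\omega^1_2(E_2)=\tfrac34\,(E_1f)/f$ simultaneously, so when you actually carry out that Codazzi step you will recover it anyway.
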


\begin{proof}
First, using \eqref{eq:PNMC-biconservative}, we have $A_3\left(E_1\right)=-fE_1$ and then, since $\trace A_3=2f$, we get $A_3\left(E_2\right)=3fE_2$.

As $\nabla^\perp E_3=0$, we have
$$\nabla^\perp E_4=0, \quad R^\perp(X,Y)E_3=0 \quad \text{and} \quad R^\perp(X,Y)E_4=0,
$$
for any $X,Y\in C\left(TM^2\right)$.

From the Ricci equation we obtain $B\left(E_1,E_2\right)=0$, so $\langle A_4\left(E_1\right), E_2\rangle=0$. On the other hand, since $\trace A_4=0$, we obtain that the matrix of $A_4$ with respect to $\left\{E_1, E_2\right\}$ is given by
\begin{equation*}
A_4=\left(
\begin{array}{cc}
\lambda & 0 \\
0 & -\lambda
\end{array}
\right),
\end{equation*}
for some smooth function $\lambda$ on $M^2$.

If we assume that $\lambda=0$ on $M^2$, or on an arbitrary open subset of $M^2$, we obtain that $\hat{\nabla}_X E_4=0$ for any $X\in C(T\mathbb{H}^4)$, hence $E_4$ is a constant vector field. Then, for any point $p\in M^2$, we can identify $(i\circ \varphi)(p)$ with its position vector in $\mathbb{R}^5$, where $i:\mathbb{H}^4\to \mathbb{R}^5$ is the canonical inclusion, and we have $(i\circ \varphi)(p)\perp E_4$. It follows that $(i\circ \varphi)(M)$ belongs to the hyperplane that passes through the origin and whose normal is $E_4$. Next, by standard arguments from the hyperbolic geometry (the hyperboloid model), we conclude that $\varphi(M)\subset\mathbb{H}^3\subset\mathbb{H}^4$ and we get a contradiction. 

Therefore, $\lambda\neq 0$ at any point of an open dense subset of $M^2$. For simplicity, we will assume that $\lambda\neq 0$ at any point of $M^2$.

In order to obtain a more explicit expression of $\lambda$, we use the Codazzi equation. More precisely, if we consider \eqref{Codazzi-equation} applied for $\eta=E_3$ we obtain the connection forms
\begin{equation}\label{connectionForms}
\omega^1_2\left(E_1\right)=0, \qquad \omega^1_2\left(E_2\right)= \frac{3}{4}\frac{E_1f}{f}.
\end{equation}
Then, from \eqref{Codazzi-equation} applied for $\eta=E_4$, we get
$$
E_1\lambda=\frac{3\lambda}{2}\frac{E_1 f}{f}, \qquad E_2\lambda=0.
$$
Thus,
$$
\frac{E_1\lambda}{\lambda}=\frac{3}{2}\frac{E_1f}{f},
$$
which is equivalent to
$$
E_1\left(\ln \frac{|\lambda|}{f^{3/2}}\right)=0.
$$
Moreover, as $E_2\lambda=0$ and $E_2f=0$, it follows that the function $\ln\left(|\lambda|/f^{3/2}\right)$ is constant, and therefore
$$
\lambda=cf^{3/2},
$$
where $c$ is a non-zero real constant.

We note that even if we had worked on a connected subset of the set of all points where $\lambda\neq 0$,  we would have obtained (from the expression of $A_4$ and the fact that $f>0$ everywhere on $M^2$) that the constant $c$ does not depend on that connected component.

Further, using \eqref{connectionForms}, it is easy to see that the Levi-Civita connection of $M^2$ is given by \eqref{Levi-Civita-connection-f}.

From the expressions of $A_3$ and $A_4$, we have
$$
B\left(E_1,E_1\right)=-fE_3+cf^{3/2}E_4, \qquad B\left(E_2,E_2\right)=3fE_3-cf^{3/2}E_4,
$$
and, thus, applying the Gauss equation, we obtain the relation between $K$ and $f$ as follows
$$
K=-1-3f^2-c^2f^3.
$$
Next, we want to obtain \eqref{second-order-invariant-f}. In order to do this, we recall that
$$
d\omega^1_2\left(E_1,E_2\right)=K\left(\omega^1\wedge\omega^2\right)\left(E_1,E_2\right),
$$
i.e,
$$
E_1\left(\omega^1_2\left(E_2\right)\right)-E_2\left(\omega^1_2\left(E_1\right)\right)-\omega^1_2\left(\left[E_1,E_2\right]\right)=K.
$$
Using \eqref{Levi-Civita-connection-f} and \eqref{connectionForms}, the above relation can be rewritten as
$$
K=\frac{3}{16f^2}\left[4E_1\left(E_1 f\right)f-7\left(E_1f\right)^2\right].
$$
Now, replacing $K$ from \eqref{relation-K-f}, we get
\begin{equation}\label{secondDerivativefInvariant}
f E_1\left(E_1f\right) =\frac{7}{4}\left(E_1f\right)^2-\frac{4}{3}f^2-4f^4-\frac{4}{3}c^2f^5.
\end{equation}
From the expression of the Laplacian of $f$, we obtain
\begin{equation*}
\Delta f =-E_1\left(E_1f\right)+\frac{3}{4}\frac{\left(E_1f\right)^2}{f},
\end{equation*}
and therefore, \eqref{secondDerivativefInvariant} is equivalent to \eqref{second-order-invariant-f}. Consequently, we get (iv).

Further, we construct around any point of $M^2$ a positively oriented local chart $X^f=X^f(u,v)$ such that
$$
\left(f\circ X^f\right)(u,v)=f(u,v)=f(u),
$$
equation \eqref{second-order-invariant-f} to be equivalent to a second order $ODE$ and the metric $g$ to have the expression given in \eqref{expression-metricg-uv}. In order to obtain such a local chart, first we construct a geometric one, involving the flow of $E_1$ which is made up by geodesics of $\left(M^2,g\right)$, and then we perform a simple change of coordinates.

Let $p_0\in M^2$ be an arbitrarily fixed point of $M^2$ and let $\sigma=\sigma\left(v_1\right)$ be an integral curve of $E_2$ with $\sigma(0)=p_0$. Let $\left\{\phi_{u_1}\right\}_{u_1\in\mathbb{R}}$ be the flow of $E_1$. We define the following positively oriented local chart
$$
X^f\left(u_1,v_1\right)=\phi_{u_1}\left(\sigma\left(v_1\right)\right)=\phi_{\sigma\left(v_1\right)}\left(u_1\right).
$$
We have 
\begin{align*}
&X^f\left(0,v_1\right)=\sigma\left(v_1\right),\\
&X^f_{v_1}\left(0,v_1\right)=\sigma'\left(v_1\right)=E_2\left(\sigma\left(v_1\right)\right)=E_2\left(0,v_1\right), \\
&X^f_{u_1}\left(u_1,v_1\right)=\phi'_{\sigma\left(v_1\right)}\left(u_1\right)=E_1\left(\phi_{\sigma\left(v_1\right)}\left(u_1\right)\right)=E_1\left(u_1,v_1\right),
\end{align*}
for any $\left(u_1,v_1\right)$. Clearly, $\left\{X^f_{u_1},X^f_{v_1}\right\}$ is positively oriented and as $E_1f>0$, we also have $X_{u_1}^f f>0$.

If we write the Riemannian metric $g$ on $M^2$ in local coordinates as
$$
g=\langle \cdot, \cdot \rangle=g_{11}\ du_1^2+2g_{12}\ du_1\  dv_1+g_{22}\  dv_1^2,
$$
we get $g_{11}=\left|X^f_{u_1}\right|^2=\left|E_1\right|^2=1$, $g_{22}\left(0,v_1\right)=1$ and $g_{12}\left(0,v_1\right)=0$. Then, it is not difficult to see that 
\begin{equation}\label{expression-E_2}
E_2=\frac{1}{\sqrt{\det G}}\left(-g_{12}X^f_{u_1}+\ X^f_{v_1}\right),
\end{equation}
where
\begin{equation*}
	G=G\left(u_1,v_1\right)=\left(
	\begin{array}{cc}
		g_{11} & g_{12} \\
		g_{12}  & g_{22}
	\end{array}
	\right).
\end{equation*}
Clearly,  $\det G=g_{22}-g_{12}^2>0$ and $\det G\left(0,v_1\right)=1$.

Further, from $E_2f=0$ and from the Levi-Civita connection of $M^2$ we have $\left[E_1,E_2\right]f=-E_2\left(E_1f\right)=0$. Now, using $E_1= X^f_{u_1}$ and \eqref{expression-E_2}, by some standard computations we achieve that $E_2\left(E_1f\right)=0$ is equivalent to $X^f_{u_1} g_{12}=0$, i.e., 
$g_{12}\left(u_1,v_1\right)=g_{12}\left(0,v_1\right)=0$.  Therefore, we achieve 
$$
E_1=X^f_{u_1}=\grad u_1,\qquad E_2=\frac{1}{\sqrt{g_{22}}} X_{v_1}^f.
$$
Since $E_1f>0$ and $E_2f=0$ it follows that $X_{u_1}^f f>0$ and $X^f_{v_1}\ f=0$. Thus,
$$
f\left(u_1,v_1\right)=f\left(u_1,0\right)=f\left(u_1\right),
$$
i.e., the level curves of $f$ are given by $u_1=const$, and $f'\left(u_1\right)>0$. From the expression of the Levi-Civita connection, we obtain
\begin{align*}
	\nabla_{E_2}{E_1}& = -\frac{3}{4}\frac{f'}{f}\frac{1}{\sqrt{g_{22}}} X_{v_1}^f\\
	& = \frac{1}{\sqrt{g_{22}}} \left(\Gamma_{21}^1 X_{u_1}^f+\Gamma_{21}^2X_{v_1}^f\right),
\end{align*}
where $\Gamma_{ij}^k$ represent the Christoffel symbols. Therefore $\Gamma_{21}^1=0$ and 
\begin{equation}\label{eq1-chiristoffel}
\Gamma_{21}^2=-\frac{3}{4}\frac{f'}{f}.
\end{equation}
On the other hand, we know that
\begin{align}\label{eq2-chirstoffel}
\Gamma_{21}^2 & =\frac{1}{2}g^{22}\left(X^f_{v_1}g_{12}+X^f_{u_1} g_{22}-X^f_{v_1} g_{21}\right) \nonumber\\
& = \frac{1}{2}\frac{X^f_{u_1} g_{22}}{g_{22}}.
\end{align}
From \eqref{eq1-chiristoffel} and $\eqref{eq2-chirstoffel}$, by an integrating process we get
$$
g_{22}\left(u_1,v_1\right)=\frac{\alpha\left(v_1\right)}{f^{3/2}\left(u_1\right)},
$$
where $\alpha$ is a positive smooth function.

Thus, the metric $g$ can be written as
$$
g\left(u_1,v_1\right)=du_1^2+\frac{\alpha\left(v_1\right)}{f^{3/2}\left(u_1\right)}dv_1^2
$$
and 
$$
E_2=E_2\left(u_1,v_1\right)=\frac{f^{3/4}\left(u_1\right)}{\sqrt{\alpha\left(v_1\right)}}X^f_{v_1}.
$$
Further, if we consider the change of coordinates 
$$
\left(u_1,v_1\right) \to \left(u=u_1,v=\int_{0}^{v_1} \sqrt{\alpha(\tau)}\ d\tau \right),
$$
we achieve $f=f\left(u_1\right)=f(u)$, $f'(u)>0$,
$$
g\left(u,v\right)=du^2+\frac{1}{f^{3/2}\left(u\right)}dv^2,
$$
\begin{equation}\label{E1E2-second chart}
E_1=\frac{\partial}{\partial u} \qquad\text{ and }\qquad E_2=f^{3/4}(u)\frac{\partial}{\partial v}.
\end{equation}
Replacing $E_1$ and $E_2$ from \eqref{E1E2-second chart} in \eqref{secondDerivativefInvariant}, by some straightforward computations we get \eqref{second-order-chart-f}.

Next, we find the first integral of \eqref{second-order-chart-f}. First, we note that for any smooth function we have
\begin{equation}\label{eq:helpfullIntegralPrime}
	\frac{1}{2}\left(\frac{\left(f'\right)^2}{f^{7/2}}\right)'=\frac{f'f''}{f^{7/2}}-\frac{7}{4}\frac{\left(f'\right)^3}{f^{9/2}}.
\end{equation}
Then, we multiply \eqref{second-order-chart-f} by $f'/f^{9/2}$ and using \eqref{eq:helpfullIntegralPrime} we obtain
$$
\left(\frac{1}{2}\frac{\left(f'\right)^2}{f^{7/2}}+\frac{8}{9}c^2f^{3/2}+8\sqrt{f}-\frac{8}{9f^{3/2}}\right)'=0.
$$
Integrating the above relation one obtains \eqref{first-integral-f}. 

In the following, we change one more time the local coordinates. Indeed, if we consider the change of coordinates
$$
(u,v)\to (f=f(u),v),
$$ 
using \eqref{first-integral-f}, we find a very explicit expression of the domain metric $g$ as
$$
g(f,v)=\frac{1}{\frac{16}{9}f^2-16f^4-\frac{16}{9}c^2f^5+2Cf^{7/2}}df^2+\frac{1}{f^{3/2}}dv^2.
$$
Moreover, the vector fields $E_1$ and $E_2$ can be rewritten as
$$
E_1=\sqrt{\frac{16}{9}f^2-16f^4-\frac{16}{9}c^2f^5+2Cf^{7/2}}\frac{\partial}{\partial f} \qquad\text{ and }\qquad E_2=f^{3/4}\frac{\partial}{\partial v},
$$
with $c\neq 0$, $C\in \mathbb{R}$.
\end{proof}

\begin{remark}\label{remark-equivalence}
	In the above proof we have seen that the hypothesis $M^2$ is completely contained in $\mathbb{H}^4$ implies that the rank of the first normal bundle is equal to $2$. In fact, it is not difficult to check that the converse holds too. Also, from the expression of the shape operator $A_3$, we note that $M^2$ cannot be pseudo-umbilical at any point.
\end{remark}

\begin{remark}
Equations \eqref{second-order-chart-f} and \eqref{first-integral-f} that are satisfied by $f$ are invariant under changes of argument of type $u\to \pm\tilde{u}+const$.
\end{remark}

\begin{remark}
From the above proof we can see that the integral curves of $E_2$ are circles.
\end{remark}

If we want to classify, up to intrinsic isometries, the Riemannian metrics given by \eqref{expression-metricg-uv} it is easier to work for this issue with their equivalent form  \eqref{g(f,v)}. Let $\left(M^2_1,g_1\right)$ and $\left(M^2_2,g_2\right)$ be two abstract surfaces given by
$$
g_i\left(f_i,v_i\right)=\theta_i\left(f_i\right)df_i^2+\frac{1}{f_i^{3/2}}dv_i^2, \qquad i\in\left\{1,2\right\} ,
$$
where 
$$
\theta_i\left(f_i\right)=\frac{1}{\frac{16}{9}f_i^2-16f_i^4-\frac{16}{9}c_i^2f_i^5+2C_if_i^{7/2}}>0,
$$
$C_i$ and $c_i$ are some real constants, $c_i\neq 0$.

\begin{proposition}\label{prop-isometry}
	If there exists an intrinsic isometry $\Psi:\left(M^2_1,g_1\right)\to\left(M^2_2,g_2\right)$, then	
	$$
	\Psi\left(f_1,v_1\right)=\left(f_1,\pm v_1+b\right),
	$$ 
	where $b$ is some real constant, $c_1^2=c_2^2$ and $C_1=C_2$. In particular, there exists a two-parametric family of Riemannian metric $g$ given by \eqref{g(f,v)}.
\end{proposition}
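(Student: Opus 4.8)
The plan is to use that an intrinsic isometry preserves the Gaussian curvature together with the whole metric structure, and to extract from this a rigid description of $\Psi$ in the coordinates of \eqref{g(f,v)}. As a preliminary step I would record that the Gaussian curvature of $\left(M_i^2,g_i\right)$ depends on the coordinate $f_i$ alone: a direct computation with the orthogonal metric \eqref{g(f,v)} (equivalently, passing to the form \eqref{expression-metricg-uv} and using the $ODE$ \eqref{second-order-chart-f}, exactly as in the proof of Theorem \ref{thm:fundamentalProperties}) gives $K_i=-1-3f_i^2-c_i^2f_i^3$, the constant $C_i$ dropping out. Since $dK_i/df_i=-3f_i\left(2+c_i^2f_i\right)<0$ on the (positive) range of $f_i$, the function $K_i$ is a strictly monotone function of $f_i$, so the level curves of $K_i$ are exactly the curves $\left\{f_i=\text{const}\right\}$.

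Because $\Psi$ is an isometry we have $K_1=K_2\circ\Psi$, so $\Psi$ carries each level curve $\left\{f_1=a\right\}$ onto a level curve $\left\{f_2=\psi(a)\right\}$ for a smooth, strictly monotone function $\psi$; hence in the coordinates $\left(f_i,v_i\right)$ we may write $\Psi\left(f_1,v_1\right)=\left(\psi\left(f_1\right),\chi\left(f_1,v_1\right)\right)$. The heart of the argument is then to impose $\Psi^\ast g_2=g_1$ and compare the coefficients of $df_1^2$, $df_1\,dv_1$ and $dv_1^2$. The $dv_1^2$--term yields $\chi_{v_1}^2=\left(\psi\left(f_1\right)/f_1\right)^{3/2}$, a function of $f_1$ only; the $df_1\,dv_1$--term yields $\chi_{f_1}\chi_{v_1}=0$, hence $\chi_{f_1}=0$, so $\chi$ depends on $v_1$ only. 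Consequently $\chi_{v_1}^2$ is at once a function of $v_1$ alone and of $f_1$ alone, so it equals a positive constant $t$, giving $\chi\left(v_1\right)=\pm\sqrt{t}\,v_1+b$ and $\psi\left(f_1\right)=t^{2/3}f_1$.

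To finish, the $df_1^2$--term of $\Psi^\ast g_2=g_1$ reads $\theta_2\left(t^{2/3}f_1\right)t^{4/3}=\theta_1\left(f_1\right)$. Inserting the explicit form of $\theta_i$ and setting $s=f_1^{1/2}$ turns this into an identity between two polynomials in $s$ carrying the monomials $s^4,s^7,s^8,s^{10}$; equating coefficients, the $s^8$--coefficient forces $t^{4/3}=1$, i.e. $t=1$, whence $\psi=\mathrm{id}$ and $\Psi\left(f_1,v_1\right)=\left(f_1,\pm v_1+b\right)$, the $s^{10}$--coefficient then gives $c_1^2=c_2^2$, and the $s^7$--coefficient gives $C_1=C_2$. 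Conversely, $c_1^2=c_2^2$ and $C_1=C_2$ already force $\theta_1=\theta_2$, hence $g_1=g_2$; so distinct values of the pair $\left(c^2,C\right)\in(0,\infty)\times\mathbb{R}$ give pairwise non-isometric metrics, and \eqref{g(f,v)} describes a genuine two-parameter family.

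The only real obstacle is conceptual and occurs at the passage from ``$\Psi$ is an isometry'' to the block form $\Psi\left(f_1,v_1\right)=\left(\psi\left(f_1\right),\chi\left(f_1,v_1\right)\right)$: one must notice that the intrinsic quantity $K$, through its explicit monotone dependence on $f$, rigidifies the level-curve foliation. After that the isometry equations collapse to the three scalar identities above, and the remaining work is routine once one observes that the powers $f^2,f^{7/2},f^4,f^5$ occurring in $1/\theta_i$ are linearly independent functions on any interval, so comparison of coefficients is legitimate. (The argument being local, it applies equally to local isometries between open subsets.)
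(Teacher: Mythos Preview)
Your argument is correct and follows essentially the same route as the paper: both proofs use preservation of the Gaussian curvature (which depends only on the $f$-coordinate) to force the first component of $\Psi$ to be a function of $f_1$ alone, then read off the three scalar identities from $\Psi^\ast g_2=g_1$ and finish by comparing coefficients of the independent powers $f_1^{1/2},\,f_1^{3/2},\,1$ (equivalently $s^4,s^7,s^8,s^{10}$). Your phrasing via the level-curve foliation of $K$ and the paper's differentiation of $K_2\circ\Psi^1=K_1$ with respect to $v_1$ are equivalent ways to obtain $\Psi^1=\psi(f_1)$; after that the algebra coincides (your $t^{2/3}$ is the paper's $a$).
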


\begin{proof}
	Let us consider 
	$$
	\Psi\left(f_1,v_1\right)=\left(\Psi^1\left(f_1,v_1\right), \Psi^2\left(f_1,v_1\right)\right)	
	$$
	an isometry between $\left(M^2_1,g_1\right)$ and $\left(M^2_2,g_2\right)$, i.e., $\Psi^\ast g_2=g_1$. Thus, the following relations hold
	\begin{equation}\label{eq1-isometry}
		\left(\frac{\partial \Psi^1}{\partial f_1}\right)^2\left(f_1,v_1\right)\cdot\theta_2\left(\Psi^1\left(u_1,v_1\right)\right)+\left(\frac{\partial \Psi^2}{\partial f_1}\right)^2\left(f_1,v_1\right)\cdot\frac{1}{\left(\Psi^1\left(f_1,v_1\right)\right)^{3/2}}=\frac{1}{f_1^{3/2}},
	\end{equation}
	\begin{equation}\label{eq2-isometry}
		\frac{\partial \Psi^1}{\partial f_1}\left(f_1,v_1\right) \cdot\frac{\partial \Psi^1}{\partial v_1}\left(f_1,v_1\right)\cdot\theta_2\left(\Psi^1\left(u_1,v_1\right)\right)+\frac{\partial \Psi^2}{\partial f_1}\left(f_1,v_1\right)\cdot \frac{\partial \Psi^2}{\partial v_1}\left(f_1,v_1\right)\cdot\frac{1}{\left(\Psi^1\left(f_1,v_1\right)\right)^{3/2}}=0
	\end{equation}
	and
	\begin{equation}\label{eq3-isometry}
		\left(\frac{\partial \Psi^1}{\partial v_1}\right)^2\left(f_1,v_1\right)\cdot\theta_2\left(\Psi^1\left(u_1,v_1\right)\right)+\left(\frac{\partial \Psi^2}{\partial v_1}\right)^2\left(f_1,v_1\right)\cdot\frac{1}{\left(\Psi^1\left(f_1,v_1\right)\right)^{3/2}}=\frac{1}{f_1^{3/2}},
	\end{equation}
	for any $f_1$ and $v_1$.
	Moreover, we also know that
	\begin{equation}\label{eq4-isometry}
		K_2\left(\Psi^1\left(f_1,v_1 \right)\right)=K_1\left(f_1\right),
	\end{equation}
   where $K_i$ represents the Gaussian curvature of $M_i^2$. If we take the derivative of \eqref{eq4-isometry} with respect to $v_1$, since $K_2'\neq 0$, we easily obtain that $\Psi^1=\Psi^1\left(f_1\right)$. Further, knowing that the function $\Psi^1$ depends only on $f_1$, from \eqref{eq2-isometry}, we get
	\begin{equation}\label{isom1}
		\frac{\partial \Psi^2}{\partial f_1}\left(f_1,v_1\right) \frac{\partial \Psi^2}{\partial v_1}\left(f_1,v_1\right)=0,
	\end{equation}
	and, from \eqref{eq3-isometry}, we have
	\begin{equation}\label{isom2}
		\left(\frac{\partial \Psi^2}{\partial v_1}\right)^2\left(f_1,v_1\right)\cdot\theta_2\left(\Psi^1\left(u_1\right)\right)=\frac{1}{f_1^{3/2}}\neq 0.
	\end{equation}
	Now, from \eqref{isom1} and \eqref{isom2}, one obtains that $\Psi^2=\Psi^2\left(v_1\right)$, and then, from \eqref{eq3-isometry} we get
	$$
	\left(\frac{d \Psi^2}{d v_1}\right)^2\left(v_1\right)=\left(\frac{\Psi^1\left(f_1\right)}{f_1}\right)^{3/2}.
	$$
	Since the left hand side of the above relation depends only of $v_1$ and the right hand side depends only of $f_1$, it follows that
	$$
	\Psi^1\left(f_1\right)=af_1, \qquad \Psi^2\left(v_1\right)=\pm a^{3/4}v_1+b,
	$$
	for any $f_1$ and $v_1$, where $a$ and $b$ are some real constants, $a>0$.	
	
	Next, from \eqref{eq1-isometry}, after some standard computations, since $f_1>0$, we obtain
	$$
	\frac{8}{9}\left(c_1^2-c_2^2a^3\right)f_1^{3/2}+8\left(1-a^2\right)\sqrt{f_1}-\left(C_1-C_2a^{3/2}\right)=0,
	$$
	for any $f_1$.
	Therefore, we get
	$$
	a=1, \qquad c_1^2=c_2^2, \qquad C_1=C_2,
	$$
	and 
	$$
	\Psi\left(f_1,v_1\right)=\left(f_1, \pm v_1+b\right), \qquad b\in\mathbb{R}.
	$$
\end{proof}

Further, in order to obtain more properties of the abstract surface $\left(M^2,g\right)$ which admits a $PNMC$ biconservative immersion in $\mathbb{H}^4$, we prefer to work with the expression \eqref{expression-metricg-uv} of the domain metric $g$ and not with the very explicit one \eqref{g(f,v)}. 

The next result shows that, given an abstract surface $\left(M^2,g\right)$, if it admits a $PNMC$ biconservative immersion $\varphi$ in $\mathbb{H}^4$, then it is unique. In particular, it follows that the function $f$ is unique and, up to the sign, the constant $c$ is unique too, depends on $\left(M^2,g\right)$ and it is not an indexing constant. This phenomenon is different from the case of minimal surfaces in a $3$-dimensional space form where, given an abstract surface $\left(M^2,g\right)$ satisfying a certain intrinsic condition, there exists a one-parametric family of minimal immersions (see \cite{L70,MM2015,R95}). Because the proof of our result closely resembles that of Theorem 3.2 in \cite{NOTYS}, we skip it.  

\begin{theorem}\label{thm:uniquenessOfImmersions}
If an abstract surface $\left(M^2,g\right)$ admits two $PNMC$ biconservative immersions in $\mathbb{H}^4$, then these immersions differ by an isometry of $\mathbb{H}^4$.
\end{theorem}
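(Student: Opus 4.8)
The plan is to reduce the statement to the uniqueness (rigidity) part of the fundamental theorem for submanifolds of a space form, once one has checked that two such immersions induce the same extrinsic data on $\left(M^2,g\right)$. Let $\varphi,\tilde\varphi\colon\left(M^2,g\right)\to\mathbb{H}^4$ be two $PNMC$ biconservative immersions, with mean curvature functions $f$, $\tilde f$ and associated nonzero constants $c$, $\tilde c$ furnished by Theorem \ref{thm:fundamentalProperties}. Each immersion comes with a frame $\left\{E_1,E_2\right\}$ on $M^2$, namely $E_1=\grad f/|\grad f|$ (respectively $\tilde E_1=\grad\tilde f/|\grad\tilde f|$) together with $E_2$, $\tilde E_2$ fixed by the orientation of $M^2$. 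The first point is that these frames agree: by \eqref{relation-K-f} the intrinsic Gaussian curvature satisfies $\grad K=-\left(6f+3c^2f^2\right)\grad f$ with $6f+3c^2f^2>0$, so $E_1=-\grad K/|\grad K|$, and the same computation for $\tilde\varphi$ gives $\tilde E_1=-\grad K/|\grad K|=E_1$; hence also $\tilde E_2=E_2$.

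Next I would prove that $f=\tilde f$ and $c^2=\tilde c^2$. By Theorem \ref{thm:fundamentalProperties}(i) the coefficient in $\nabla_{E_2}E_1=-\tfrac34\tfrac{E_1f}{f}E_2$ is an intrinsic quantity of $\left(M^2,g\right)$, so $E_1\ln f=E_1\ln\tilde f$; moreover $E_2f=0$, and since $\grad\tilde f$ is parallel to $E_1$ also $E_2\tilde f=0$. Hence $\grad\ln\left(f/\tilde f\right)=0$ on the connected surface $M^2$, that is, $f=a\tilde f$ for some constant $a>0$. Substituting into \eqref{relation-K-f} yields $3\left(a^2-1\right)\tilde f^2+\left(a^3c^2-\tilde c^2\right)\tilde f^3=0$ on all of $M^2$; since $\grad\tilde f\neq 0$ the function $\tilde f$ is nonconstant, so both coefficients must vanish, giving $a=1$, $f=\tilde f$, and then $c^2=\tilde c^2$.

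With $f=\tilde f$ in hand, Theorem \ref{thm:fundamentalProperties}(i)--(ii) shows that, in the common frame $\left\{E_1,E_2\right\}$, the shape operators $A_3,A_4$ of $\varphi$ are represented by the same matrices as those of $\tilde\varphi$ (they depend only on $f$ and $c$, the latter up to sign), and both normal connections are flat with the respective orthonormal normal frames parallel. Choosing the sign so that $\tilde c=\pm c$ and defining the vector bundle isometry $\tau$ from the normal bundle of $\varphi$ to that of $\tilde\varphi$ by $\tau E_3=\tilde E_3$ and $\tau E_4=\pm\tilde E_4$ accordingly, one checks that $\tau$ is parallel with respect to the normal connections and satisfies $\tau\circ B=\tilde B$, since $B\left(E_1,E_1\right)=-fE_3+cf^{3/2}E_4$, $B\left(E_2,E_2\right)=3fE_3-cf^{3/2}E_4$ and $B\left(E_1,E_2\right)=0$ are carried by $\tau$ to the corresponding expressions for $\tilde\varphi$. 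Since $\varphi$ and $\tilde\varphi$ are honest isometric immersions into the space form $\mathbb{H}^4$, the Gauss, Codazzi and Ricci equations hold automatically, so the fundamental theorem for submanifolds of space forms produces an isometry $\Phi$ of $\mathbb{H}^4$ with $\tilde\varphi=\Phi\circ\varphi$. This is exactly the scheme of the proof of Theorem 3.2 in \cite{NOTYS}, whose routine details we therefore omit.

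The step I expect to be the main obstacle is the identification $f=\tilde f$ together with $c^2=\tilde c^2$: a priori two $PNMC$ biconservative immersions of the same abstract surface could carry different mean curvature functions, so one really has to recover $f$, and the constant $c$ up to sign, from the intrinsic geometry of $g$ — here through the Gaussian curvature, its gradient, and the connection coefficient of $\nabla_{E_2}E_1$. Once this is done the remainder is the classical rigidity argument, and the residual sign ambiguity $c\mapsto-c$, which comes only from the choice of orientation in the normal bundle, is absorbed harmlessly into the bundle isometry $\tau$.
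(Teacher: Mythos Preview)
Your proposal is correct and follows exactly the approach the paper intends: the paper omits the proof entirely, pointing to Theorem~3.2 in \cite{NOTYS}, and your argument is precisely the adaptation of that scheme to $\mathbb{H}^4$ --- recover the common frame via $\grad K$, use the intrinsic connection coefficient together with the Gauss equation \eqref{relation-K-f} to force $f=\tilde f$ and $c^2=\tilde c^2$, then invoke the rigidity half of the fundamental theorem of submanifolds with the sign of $c$ absorbed into the normal bundle isometry. There is nothing to add.
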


Further, we give a converse of item (v) in Theorem \ref{thm:fundamentalProperties}. More precisely, we have the next result where, in order to directly apply the Fundamental Theorem of Submanifolds in space forms we impose $M^2$ to be simply connected.

\begin{theorem} \label{abstract-to-existence-immersion}
Let $\left(M^2,g\right)$ be an abstract surface. Assume that $M^2$ is simply connected and there exist $(u,v)$ global coordinates such that
$$
g(u,v)=du^2+\frac{1}{f^{3/2}(u)} dv^2,
$$
where $f=f(u)$ is some positive solution of the second order ODE \eqref{second-order-chart-f} with some non-zero real constant $c$, satisfying $f'(u)>0$. Then, there exists a (unique) $PNMC$ biconservative immersion $\varphi:\left(M^2,g\right)\to\mathbb{H}^4$ such that $f$ is its mean curvature.
\end{theorem}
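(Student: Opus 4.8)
The plan is to manufacture, by hand, the extrinsic data of the would-be immersion directly from Theorem~\ref{thm:fundamentalProperties}, to check that they satisfy the Gauss, Codazzi and Ricci equations of $\mathbb{H}^4$, and then to invoke the Fundamental Theorem of Submanifolds in space forms.

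First I would fix the global orthonormal frame $E_1=\partial/\partial u$, $E_2=f^{3/4}(u)\,\partial/\partial v$ of $\left(M^2,g\right)$ and compute the Levi-Civita connection of $g=du^2+f^{-3/2}\,dv^2$ directly; a short Christoffel-symbol computation gives exactly \eqref{Levi-Civita-connection-f}, and since $f=f(u)$ with $f'>0$ one has $\grad f=f'E_1\neq 0$ everywhere. Next I would take the normal bundle to be the trivial rank-two bundle with a global orthonormal frame $\left\{E_3,E_4\right\}$, declare $\nabla^\perp E_3=\nabla^\perp E_4=0$, and define the symmetric bundle map $B\colon TM^2\times TM^2\to NM^2$ by
$$
B\left(E_1,E_1\right)=-fE_3+cf^{3/2}E_4,\qquad B\left(E_1,E_2\right)=0,\qquad B\left(E_2,E_2\right)=3fE_3-cf^{3/2}E_4,
$$
i.e.\ by the shape operators $A_3=\operatorname{diag}(-f,3f)$ and $A_4=\operatorname{diag}(cf^{3/2},-cf^{3/2})$ with respect to $\left\{E_1,E_2\right\}$.

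Then I would verify the three fundamental equations for these data. Since $\mathbb{H}^4$ has constant curvature $-1$, for a normal vector $\eta$ one has $R^{\mathbb{H}^4}(X,Y)\eta=0$; together with $\nabla^\perp\equiv 0$ this collapses the Codazzi equation \eqref{Codazzi-equation} to $(\nabla_X A_j)(Y)=(\nabla_Y A_j)(X)$ for $j=3,4$, and the Ricci equation \eqref{Ricci-equation} to $[A_3,A_4]=0$. The Ricci condition is immediate because $A_3$ and $A_4$ are simultaneously diagonal; the Codazzi conditions are a direct check using \eqref{Levi-Civita-connection-f} (they are, in essence, the computations that produced \eqref{connectionForms} and the expression of $A_4$ in Theorem~\ref{thm:fundamentalProperties}, read in reverse). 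For the Gauss equation \eqref{Gauss-equation} it suffices, by the algebraic symmetries of the curvature tensor, to test it on $\left(E_1,E_2,E_2,E_1\right)$, where it reads $K=-1-3f^2-c^2f^3$; on the other hand, the Gaussian curvature of $g=du^2+f^{-3/2}\,dv^2$ is $K=\tfrac{3}{16f^2}\bigl(4ff''-7(f')^2\bigr)$, and equating the two expressions is precisely the ODE \eqref{second-order-chart-f}. This identification is the heart of the argument; every other verification is routine bookkeeping, so the main (mild) obstacle is simply organizing these checks cleanly.

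Finally, $M^2$ being simply connected and the Gauss, Codazzi, Ricci equations being satisfied, the Fundamental Theorem of Submanifolds in space forms provides an isometric immersion $\varphi\colon\left(M^2,g\right)\to\mathbb{H}^4$ with second fundamental form $B$ and normal connection $\nabla^\perp$, unique up to an isometry of $\mathbb{H}^4$. It remains to read off its geometry: $H=\tfrac12\bigl(B(E_1,E_1)+B(E_2,E_2)\bigr)=fE_3$, so $|H|=f$ and $E_3=H/|H|$ is $\nabla^\perp$-parallel, i.e.\ $\varphi$ is $PNMC$; and $A_3(\grad f)=f'A_3(E_1)=-ff'E_1=-f\grad f$, which is \eqref{eq:PNMC-biconservative}, so $\varphi$ is biconservative with mean curvature function $f$. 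Since $c\neq 0$ and $f>0$, the first normal space has rank $2$ at every point, hence $M^2$ is completely contained in $\mathbb{H}^4$ by Remark~\ref{remark-equivalence}. Uniqueness up to an isometry of $\mathbb{H}^4$ is then Theorem~\ref{thm:uniquenessOfImmersions}; alternatively, by Theorem~\ref{thm:fundamentalProperties} any $PNMC$ biconservative immersion of $\left(M^2,g\right)$ with mean curvature $f$ must carry exactly the data $(B,\nabla^\perp)$ constructed above, so the claim also follows from the uniqueness clause of the Fundamental Theorem.
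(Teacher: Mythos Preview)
Your proposal is correct and follows essentially the same approach as the paper: construct the candidate extrinsic data (trivial rank-two normal bundle with flat connection and the shape operators $A_3=\operatorname{diag}(-f,3f)$, $A_4=\operatorname{diag}(cf^{3/2},-cf^{3/2})$), verify the Gauss, Codazzi and Ricci equations---with the Gauss equation reducing exactly to the ODE \eqref{second-order-chart-f}---and then invoke the Fundamental Theorem of Submanifolds to produce the immersion and read off the $PNMC$ biconservative properties. The only cosmetic difference is that the paper orients its frame via $\grad K$ (so $\tilde{E}_1=-\partial/\partial u$) rather than via $\grad f$, and it spells out the trivial bundle $\Upsilon=M^2\times\mathbb{E}^2$ with its canonical sections a bit more formally.
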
 

\begin{proof}
From the expression of the metric $g$, we can compute the Christoffel symbols and then the Gaussian curvature of $M$. More precisely, we obtain
	\begin{align*}
		K & = -\frac{1}{g_{11}}\left(\left(\Gamma_{12}^2\right)_u-\left(\Gamma_{11}^2\right)_v+\Gamma_{12}^1\Gamma_{11}^2+\Gamma_{12}^2\Gamma_{12}^2-\Gamma_{11}^2\Gamma_{22}^2-\Gamma_{11}^1\Gamma_{12}^2\right) \\
		& = \frac{12ff''-21\left(f'\right)^2}{16f^2}.
	\end{align*}

Next, using \eqref{second-order-chart-f} it follows that 
$$
K=-1-3f^2-c^2f^3.
$$ 
Therefore, $1+K<0$, $|\grad f|=f'>0$ and $|\grad K|=-K'>0$ at any point of $M^2$. We set
$$
\tilde{E}_1=\frac{\grad K}{|\grad K|}=-\frac{\grad f}{|\grad f|}=-\frac{\partial}{\partial u}.
$$
and $\tilde{E}_2\in C\left(TM^2\right)$ such that $\left\{\tilde{E}_1,\tilde{E}_2\right\}$ is a positively oriented global orthonormal frame field. It is easy to check that 
$$
\tilde{E}_2=-f^{3/4}(u)\frac{\partial}{\partial v}.
$$
and the Levi-Civita connection of $M^2$ is given by
$$
\nabla_{\tilde{E}_1}\tilde{E}_2=0, \quad \nabla_{\tilde{E}_2}\tilde{E}_1=\frac{3}{4}\frac{f'}{f}\tilde{E}_2, \quad \nabla_{\tilde{E}_2}\tilde{E}_2=-\frac{3}{4}\frac{f'}{f}\tilde{E}_1.
$$
Further, we consider $\Upsilon=M^2\times\mathbb{E}^2$ the trivial vector bundle of rank two over $M^2$. We define $\sigma_3$ and $\sigma_4$  by

\begin{align*}
\sigma_3(p) &=(p,(1,0)), \qquad p\in M^2, \\
\sigma_4(p) &=(p,(0,1)), \qquad p\in M^2,
\end{align*}
which form the canonical global frame field of $\Upsilon$, by $\mathfrak{g}$ the metric on $\Upsilon$ defined by

$$
\mathfrak{g}\left(\sigma_\alpha,\sigma_\beta\right)=\langle \sigma_\alpha,\sigma_\beta \rangle=\delta_{\alpha\beta}, \qquad \alpha,\beta=3,4,
$$
and by $\nabla^\mathfrak{g}$ the connection on $\Upsilon$ given by
$$
\nabla^\mathfrak{g}_{\tilde{E}_i}\sigma_\alpha=0, \qquad i=1,2,\  \alpha=3,4.
$$
Clearly, the pair $\left(\nabla^\mathfrak{g},\mathfrak{g}\right)$ is a Riemannian structure, i.e.,
$$
X\langle\sigma,\rho\rangle=\langle\nabla^\mathfrak{g}_X\sigma,\rho\rangle+\langle \sigma, \nabla^\mathfrak{g}_X\rho\rangle, \qquad X\in C\left(TM^2\right), \ \rho,\sigma\in C\left(\Upsilon\right),
$$
and the curvature tensor field is given by
$$
R^a\mathfrak{g}\left(\tilde{E}_i,\tilde{E}_j\right)\sigma_\alpha=0, \qquad i=1,2,\  \alpha=3,4.
$$
Let us define $B^\mathfrak{g}:C\left(TM^2\right)\times C\left(TM^2\right)\to C\left(\Upsilon\right)$ by
\begin{equation*}
		\left\{
		\begin{array}{lll}
			B^\mathfrak{g}a\left(\tilde{E}_1,\tilde{E}_1\right)=-f\sigma_3+cf^{3/2}\sigma_4 \\\\
			B^\mathfrak{g}\left(\tilde{E}_1,\tilde{E}_2\right)=B^\mathfrak{g}\left(E_2,E_1\right)=0 \\\\
			B^\mathfrak{g}\left(\tilde{E}_2,\tilde{E}_2\right)= 3f\sigma_3-cf^{3/2}\sigma_4
		\end{array}
		\right..
\end{equation*}
Consider $A^\mathfrak{g}_\alpha\in C\left(End\left(TM^2\right)\right)$, $\alpha=3,4$, given by
$$
	\langle A^a_\mathfrak{g}\alpha\left(E_i\right),E_j\rangle=\langle B^\mathfrak{g}\left(E_i,E_j\right),\sigma_\alpha\rangle, \qquad i,j=1,2, \ \alpha=3,4.
$$
We can see that $A^\mathfrak{g}_\alpha$ satisfies, formally, the Gauss, Codazzi and Ricci equations for surfaces in $\mathbb{H}^4$. Therefore, according to the Fundamental Theorem of Submanifolds, locally, there exists an isometric embedding $\varphi:\left(M^2,g\right)\to \mathbb{H}^4$ and a vector bundle isometry $\psi:\Upsilon\to NM$ such that
$$
	\nabla^\perp \psi=\psi\nabla^\mathfrak{g}\qquad \text{and} \qquad B=\psi\circ B^\mathfrak{g},
$$
i.e.,

\begin{equation*}
		\left\{
		\begin{array}{lll}
			\nabla^\perp_{\tilde{E}_i}\left(\psi\left(\sigma_\alpha\right)\right)=\psi\left(\nabla^\mathfrak{g}_{\tilde{E}_i}\sigma_\alpha\right)=0\\\\
			B\left(\tilde{E}_i,\tilde{E}_j\right)=\psi\left(B^\mathfrak{g}\left(\tilde{E}_i,\tilde{E}_j\right)\right), \qquad i,j=1,2,\ \alpha=3,4.
		\end{array}
		\right.
\end{equation*}

As the last step, we denote by $\tilde{E}_\alpha=\psi\left(\sigma_\alpha\right)$ , $\alpha=3,4$. With all the above notations we can see that, with respect to $\left\{\tilde{E}_1, \tilde{E}_2\right\}$, the shape operators $A_{\tilde{E}_\alpha}=A_\alpha$ have the matrices
\begin{equation*}
		A_3=\left(
		\begin{array}{cc}
			-f & 0 \\
			0 & 3f
		\end{array}
		\right),\quad
		A_4=\left(
		\begin{array}{cc}
			cf^{3/2} & 0 \\
			0 & -cf^{3/2}
		\end{array}
		\right),
\end{equation*}
the mean curvature function of the immersion $\varphi$ is $f$ and $\tilde{E}_3=H/f$.
	
Now, we prove that $\varphi$ is a $PNMC$ biconservative immersion in $\mathbb{H}^4$ with $f>0$, $\grad f\neq 0$ at any point, and the surface is completely contained in $\mathbb{H}^4$.
	
First, by straightforward computations, we obtain that $\varphi$ is a $PNMC$ biconservative immersion in $\mathbb{H}^4$ with $f>0$ and $\grad f\neq 0$ at any point. It remains to prove that any open subset of $M^2$ cannot lie in any totally geodesic $\mathbb{H}^3\subset\mathbb{H}^4$. Indeed, since
$$
B\left(\tilde{E}_1,\tilde{E}_1\right)=-f\tilde{E}_3+cf^{3/2}\tilde{E}_4
$$
and
$$
B\left(\tilde{E}_2,\tilde{E}_2\right)= 3f\tilde{E}_3-cf^{3/2}\tilde{E}_4,
$$
it follows that $\left\{B_p\left(\tilde{E}_1,\tilde{E}_1\right), B_p\left(\tilde{E}_2,\tilde{E}_2\right)\right\}$ is a basis in $N_pM^2$, for any $p\in M^2$.
	
Therefore, the first normal bundle $N_1$ defined by
\begin{align*}
		N_1=\Span \Imag(B) & = \Span\left\{B\left(\tilde{E}_1,\tilde{E}_1\right), B\left(\tilde{E}_2,\tilde{E}_2\right)\right\} \\
		& = \Span\left\{\tilde{E}_3,\tilde{E}_4\right\}.
\end{align*}
has the rank equal to $2$ everywhere.
\end{proof}

From Theorems \ref{thm:fundamentalProperties}, \ref{thm:uniquenessOfImmersions} and \ref{abstract-to-existence-immersion} we conclude with the following intrinsic characterization of $PNMC$ biconservative surfaces in $\mathbb{H}^4$.

\begin{theorem}\label{first-charact-theorem-intrinsic}
Let $\left(M^2,g\right)$ be an abstract surface. Then, $M^2$ can be locally uniquely isometrically embedded in $\mathbb{H}^4$ as a $PNMC$ biconservative surface, completely contained in $\mathbb{H}^4$, with positive mean curvature and nowhere vanishing  gradient of the mean curvature, if and only if the metric $g$ is given by
$$
g(u,v)=du^2+\frac{1}{f^{3/2}(u)} dv^2,
$$
where $f=f(u)$ is some positive solution of the second order ODE \eqref{second-order-chart-f} with some non-zero real constant $c$, satisfying $f'(u)>0$. Moreover, in this case, $f$ represents the mean curvature of the $PNMC$ biconservative immersion.
\end{theorem}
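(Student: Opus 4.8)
The plan is to obtain Theorem \ref{first-charact-theorem-intrinsic} as a direct synthesis of the three preceding results, Theorems \ref{thm:fundamentalProperties}, \ref{thm:uniquenessOfImmersions} and \ref{abstract-to-existence-immersion}, with only a small amount of work to reconcile the local nature of the statement with the simply-connected/global-coordinate hypotheses of Theorem \ref{abstract-to-existence-immersion}.

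For the ``only if'' direction, I would assume $(M^2,g)$ admits a local isometric embedding $\varphi$ into $\mathbb{H}^4$ as a $PNMC$ biconservative surface that is completely contained in $\mathbb{H}^4$, with $f=|H|>0$ and $\grad f\neq 0$ everywhere. Then all the standing assumptions of Section \ref{sec-intrinsic} are satisfied on the relevant open set, so Theorem \ref{thm:fundamentalProperties} applies; its item (v) states precisely that around each point there are positively oriented coordinates $(u,v)$ in which $f$ depends only on $u$, satisfies $f'(u)>0$ and the second order ODE \eqref{second-order-chart-f} for some non-zero constant $c$, and in which $g=du^2+f^{-3/2}(u)\,dv^2$. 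This is exactly the asserted form of the metric, and $f$ is by construction the mean curvature of $\varphi$.

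For the ``if'' direction, suppose $g$ has the stated form on a coordinate neighborhood, with $f=f(u)$ a positive solution of \eqref{second-order-chart-f} for some $c\neq 0$ and $f'>0$. Passing to a smaller neighborhood $U$ I may assume $U$ is simply connected and that $(u,v)$ are global coordinates on it; then Theorem \ref{abstract-to-existence-immersion} produces a (unique) $PNMC$ biconservative immersion, in fact an embedding, $\varphi:(U,g)\to\mathbb{H}^4$ whose mean curvature is $f$, and whose construction there guarantees $f>0$, $\grad f\neq 0$ and --- via the first normal bundle having rank $2$ everywhere --- that $\varphi(U)$ is completely contained in $\mathbb{H}^4$. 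Thus $M^2$ is locally isometrically embeddable in $\mathbb{H}^4$ with all the required properties, with $f$ as its mean curvature. Local uniqueness then follows from Theorem \ref{thm:uniquenessOfImmersions}: two such local embeddings restrict, on a common simply connected coordinate patch, to two $PNMC$ biconservative immersions of the same abstract surface, hence differ there by an isometry of $\mathbb{H}^4$.

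I expect the only point genuinely requiring care --- and it is minor --- to be this passage between the local formulation stated here and the simply-connected, globally-coordinatized settings in which Theorems \ref{thm:uniquenessOfImmersions} and \ref{abstract-to-existence-immersion} are phrased; one should also note in passing that the constant $c$ delivered by Theorem \ref{thm:fundamentalProperties}(ii) in the forward direction is exactly the constant appearing in the ODE \eqref{second-order-chart-f} that is prescribed in the backward direction, so the two halves of the equivalence refer to the same datum. Everything else is bookkeeping, since the analytic content is already carried by the three cited theorems.
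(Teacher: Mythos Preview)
Your proposal is correct and matches the paper's own treatment: the paper presents Theorem \ref{first-charact-theorem-intrinsic} as an immediate consequence of Theorems \ref{thm:fundamentalProperties}, \ref{thm:uniquenessOfImmersions} and \ref{abstract-to-existence-immersion}, without supplying a separate proof. Your additional remarks about shrinking to a simply connected coordinate patch and identifying the constant $c$ across the two directions are exactly the minor bookkeeping needed to make the synthesis precise.
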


From Proposition \ref{prop-isometry} and Theorem \ref{first-charact-theorem-intrinsic} it follows the next corollary.

\begin{corollary}
There exists a two-parametric family of $PNMC$ biconservative surfaces in $\mathbb{H}^4$ with $f>0$ and $\grad f\neq 0$ everywhere. 
\end{corollary}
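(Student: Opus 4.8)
The plan is to assemble the corollary directly from the three preceding results, with essentially no new computation required. First I would invoke Theorem~\ref{first-charact-theorem-intrinsic}: an abstract surface $\left(M^2,g\right)$ carries (after passing to its simply connected model, as in Theorem~\ref{abstract-to-existence-immersion}) a $PNMC$ biconservative embedding into $\mathbb{H}^4$ with $f>0$ and $\grad f\neq 0$ everywhere if and only if $g$ has, in suitable coordinates, the form \eqref{expression-metricg-uv} with $f$ a positive, strictly increasing solution of \eqref{second-order-chart-f} for some $c\neq 0$; equivalently, by item~(vi) of Theorem~\ref{thm:fundamentalProperties}, $g$ is the explicit metric \eqref{g(f,v)}, which depends on the two real constants $c\neq 0$ and $C\in\mathbb{R}$ (the integration constant of the first integral \eqref{first-integral-f}). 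Thus the class of such surfaces is, a priori, an at-most-two-parameter family, and the content of the corollary is that it is exactly two-dimensional.

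Next I would pin down the correspondence up to congruence. By Theorem~\ref{thm:uniquenessOfImmersions}, two $PNMC$ biconservative immersions of the same abstract surface differ by an isometry of $\mathbb{H}^4$; combined with Theorem~\ref{first-charact-theorem-intrinsic} this means congruence classes of $PNMC$ biconservative surfaces in $\mathbb{H}^4$ (with $f>0$, $\grad f\neq 0$) are in bijection with intrinsic isometry classes of abstract surfaces $\left(M^2,g\right)$ whose metric has the form \eqref{g(f,v)}. Then Proposition~\ref{prop-isometry} identifies those isometry classes: the metrics attached to $\left(c_1,C_1\right)$ and $\left(c_2,C_2\right)$ are intrinsically isometric precisely when $c_1^2=c_2^2$ and $C_1=C_2$. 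Hence the isometry classes are parametrized injectively by the pair $\left(|c|,C\right)\in(0,\infty)\times\mathbb{R}$, so the two parameters are genuinely independent.

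It remains only to note non-emptiness for every admissible pair $\left(|c|,C\right)$: for any $c\neq 0$ and any $C\in\mathbb{R}$, the right-hand side of \eqref{first-integral-f}, namely $\tfrac{16}{9}f^2-16f^4-\tfrac{16}{9}c^2f^5+2Cf^{7/2}$, is positive for $f>0$ small, since its lowest-order term is $\tfrac{16}{9}f^2$; therefore \eqref{first-integral-f} admits a positive solution $f=f(u)$ with $f'>0$ on a suitable interval, and Theorem~\ref{abstract-to-existence-immersion} then produces the desired immersion with mean curvature $f$. Combining these observations yields a two-parameter family of $PNMC$ biconservative surfaces in $\mathbb{H}^4$, as claimed.

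I do not foresee a genuine obstacle here: every substantial step — the intrinsic characterization, the uniqueness of the immersion, and the classification of the metrics up to isometry — has already been established, so the only care needed is bookkeeping: reading ``two-parametric'' as ``two parameters up to congruence of the ambient space,'' and checking that each parameter value is actually realized. The one mild point worth a remark is that the translation freedom $u\mapsto\pm u+\mathrm{const}$ in the ODE \eqref{second-order-chart-f} does not introduce a spurious third parameter, but this is precisely absorbed by the ``$\pm v_1+b$'' ambiguity appearing in Proposition~\ref{prop-isometry}.
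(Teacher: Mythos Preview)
Your proposal is correct and follows exactly the route the paper indicates: the corollary is stated there as an immediate consequence of Proposition~\ref{prop-isometry} and Theorem~\ref{first-charact-theorem-intrinsic}, with no further argument given, so your write-up (including the uniqueness input from Theorem~\ref{thm:uniquenessOfImmersions} and the non-emptiness check via the small-$f$ behaviour of \eqref{first-integral-f}) is in fact more complete than the paper's. One small inaccuracy in your closing aside: the $u\mapsto\pm u+\mathrm{const}$ freedom is absorbed by passing to the $(f,v)$ coordinates of \eqref{g(f,v)} (where $f$ itself becomes the coordinate), not by the $\pm v_1+b$ ambiguity of Proposition~\ref{prop-isometry}, which concerns the $v$-direction only.
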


Further, we give three intrinsic geometric characterizations of the abstract surfaces $\left(M^2,g\right)$ with $g$ given by \eqref{expression-metricg-uv}.

The first one is related to the fact that the level curves of the Gaussian curvature $K$ of $M^2$ are circles. In order to obtain this characterization, we first give 

\begin{theorem} \label{first-intrinsic-charact}
Let $\left(M^2,g\right)$ be an abstract surface. Assume that around any point of $M^2$ there exist $(u,v)$ positively oriented local coordinates such that
$$
g(u,v)=du^2+\frac{1}{f^{3/2}(u)} dv^2,
$$
where $f=f(u)$ is some positive solution of the second order ODE \eqref{second-order-chart-f} with some non-zero real constant $c$, satisfying $f'(u)>0$. Then, the Gaussian curvature $K$ of $M^2$ satisfies \eqref{relation-K-f}, $1+K<0$, $\grad K\neq 0$ at any point of $M^2$, and the level curves of $K$ are circles of $M^2$ with positive constant signed curvature
\begin{equation*}
\kappa=\kappa(u)=\frac{3}{4}\frac{f'(u)}{f(u)}=-\frac{1}{4}\frac{|\grad K|}{K+f^2+1}.
\end{equation*}
\end{theorem}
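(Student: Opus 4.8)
The plan is to run the whole argument in the coordinates $(u,v)$ in which $g=du^2+f^{-3/2}(u)\,dv^2$, exactly as at the start of the proof of Theorem~\ref{abstract-to-existence-immersion}. First I would compute the Christoffel symbols of $g$ and obtain $K=\bigl(12ff''-21(f')^2\bigr)/(16f^2)$; substituting $ff''$ from the second order ODE \eqref{second-order-chart-f} yields at once \eqref{relation-K-f}, i.e. $K=-1-3f^2-c^2f^3$. Since $f>0$ and $c\neq 0$ this gives $1+K=-f^2(3+c^2f)<0$. Differentiating and using $f'>0$ gives $K'(u)=-3ff'(2+c^2f)<0$; because $g_{11}=1$ we have $\grad K=K'(u)\,\partial_u$, so $\grad K\neq 0$ everywhere, $|\grad K|=-K'=3ff'(2+c^2f)$, and $K=K(u)$ is a strictly decreasing function of $u$.

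From the strict monotonicity of $K$ in $u$, the level curves of $K$ are exactly the coordinate curves $\{u=\mathrm{const}\}$. I would then recall from Theorem~\ref{thm:fundamentalProperties}(v) that in these coordinates $E_1=\partial_u$ and $E_2=f^{3/4}(u)\,\partial_v$ form a positively oriented orthonormal frame, so that along a level curve $u=u_0$ the arc-length parametrization has unit tangent $T=E_2$ and unit normal $E_1$. By the expression \eqref{Levi-Civita-connection-f} of the Levi-Civita connection, $\nabla_{E_2}E_2=\tfrac34\tfrac{f'}{f}E_1$ and $\nabla_{E_2}E_1=-\tfrac34\tfrac{f'}{f}E_2$; since $f$ and $f'$ are constant on $\{u=u_0\}$, these are precisely the Frenet equations of a circle of $M^2$ with constant signed curvature $\kappa=\kappa(u_0)=\tfrac34 f'(u_0)/f(u_0)$ (with $E_1$ as unit normal), which is positive because $f>0$ and $f'>0$.

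It only remains to match this with the second expression for $\kappa$. Using \eqref{relation-K-f} one gets $K+f^2+1=-2f^2-c^2f^3=-f^2(2+c^2f)<0$, and combining with $|\grad K|=3ff'(2+c^2f)$ yields
$$
-\frac14\,\frac{|\grad K|}{K+f^2+1}=-\frac14\cdot\frac{3ff'(2+c^2f)}{-f^2(2+c^2f)}=\frac34\,\frac{f'}{f}=\kappa,
$$
as claimed. I do not expect a genuine obstacle in this statement: once the metric is put in the normal form of item (v), everything reduces to the ODE substitution and a short connection computation. The only conceptual point worth stating explicitly is the identification of the level sets of $K$ with the curves $u=\mathrm{const}$, which relies on the strict monotonicity $K'(u)<0$ established in the first step.
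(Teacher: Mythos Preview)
Your proof is correct and follows essentially the same route as the paper's: compute $K$ from the warped metric, substitute the ODE \eqref{second-order-chart-f} to get \eqref{relation-K-f}, and then read off the Frenet equations of the level curves of $K$ from the Levi-Civita connection. The only cosmetic difference is orientation: the paper works with the frame $\tilde E_1=\grad K/|\grad K|=-\partial_u$, $\tilde E_2=-f^{3/4}\partial_v$, so that the orientation-induced normal to the tangent $\tilde E_2$ is $-\tilde E_1=\partial_u$ and the signed curvature $\tfrac34 f'/f$ comes out positive directly; with your choice of tangent $E_2=f^{3/4}\partial_v$ the positively oriented normal is $-E_1$, so to match the \emph{positive} signed curvature in the statement you should traverse the level curve in the $-E_2$ direction (equivalently, take $\tilde E_2$ as the paper does).
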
 

\begin{proof}
We have already seen in the proof of Theorem \ref{abstract-to-existence-immersion} that the Gaussian curvature $K=K(u)$ satisfies \eqref{relation-K-f}, $1+K<0$ and $\grad K\neq 0$ at any point of $M^2$. It remains to prove that the level curves of $K$ are circles of $M^2$ and find their curvature. In order to obtain this, we consider $\left\{\tilde{E}_1,\tilde{E}_2\right\}$ the positively oriented global orthonormal frame field on $M^2$ defined in the proof of Theorem \ref{abstract-to-existence-immersion}, i.e, 
$$
\tilde{E}_1=\frac{\grad K}{|\grad K|}=-\frac{\grad f}{|\grad f|}=-\frac{\partial}{\partial u}
$$
and
$$
\tilde{E}_2=-f^{3/4}(u)\frac{\partial}{\partial v}.
$$
Then, the Levi-Civita connection of $M^2$ is given by
$$
\nabla_{\tilde{E}_1}\tilde{E}_1=\nabla_{\tilde{E}_1}\tilde{E}_2=0, \quad \nabla_{\tilde{E}_2}\tilde{E}_1=\frac{3}{4}\frac{f'}{f}\tilde{E}_2, \quad \nabla_{\tilde{E}_2}\tilde{E}_2=-\frac{3}{4}\frac{f'}{f}\tilde{E}_1,
$$
and, since $K=K(u)$, it is clear that $\tilde{E}_1 K=-K'$ and $\tilde{E}_2K=0$. Thus, the integral curves of $\tilde{E}_2$ are the level curves of $K$. Also, we note that $\left\{\tilde{E}_2,-\tilde{E}_1\right\}$ is a positively oriented orthonormal frame field.

Now, let us consider $\gamma$ an integral curve of $\tilde{E}_2$, so $\gamma$ is a curve parametrized by arc-length. Then, if we define
\begin{equation*}
\kappa(u)= \frac{3}{4}\frac{f'(u)}{f(u)}>0
\end{equation*}
we obtain $\tilde{E}_2\kappa=0$,
$$
\nabla_{\tilde{E}_2}\tilde{E}_1=\kappa\tilde{E}_2, \qquad \text{ and } \qquad  \nabla_{\tilde{E}_2}\tilde{E}_2=\kappa\left(-\tilde{E}_1\right).
$$

As $\left\{\gamma', -\tilde{E}_{1|\gamma}\right\}$ is positively oriented and
$$
\nabla_{\gamma'}\left(-\tilde{E}_{1}\right)=-\kappa_{|\gamma}\gamma' \qquad \text{ and } \qquad \nabla_{\gamma'}\gamma'=\kappa_{|\gamma}\left(-\tilde{E}_{1|\gamma}\right) ,
$$
where $\kappa_{|\Gamma}$ is a constant, it follows that $\gamma$ is a circle of $M^2$ with positive constant signed curvature $\kappa$.
Moreover, using the link between $K$ and $f$, it is not difficult to see that
$$
\kappa=\frac{3}{4}\frac{|\grad f|}{f}=-\frac{1}{4}\frac{|\grad K|}{K+f^2+1}>0.
$$

\end{proof}

We have seen that if the metric $g$ is given by \eqref{expression-metricg-uv}, where $f$ is some positive solution of the second order ODE \eqref{second-order-chart-f}, then $f$ formally satisfies the Gauss equation \eqref{relation-K-f}. In order to obtain the converse of Theorem \ref{first-intrinsic-charact}, we need to change the interpretation of $f$. First, we give the next lemma.

\begin{lemma}\label{lemma:uniqueSolution}
	Let $\left(M^2,g\right)$ be an abstract surface such that $1+K<0$ and $c$ be a non-zero real constant. Then, the polynomial equation
	\begin{equation}\label{polynomialEquation}
		1+K+3f^2+c^2f^3=0
	\end{equation}
	and the condition $f>0$  uniquely determine $f$.
\end{lemma}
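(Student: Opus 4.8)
The plan is to treat \eqref{polynomialEquation} as a cubic polynomial equation in the single unknown $f$ at each point $p\in M^2$, with the coefficient data $K=K(p)$ fixed, and to show that among its (at most three) complex roots exactly one is real and positive. Write $P(f)=c^2f^3+3f^2+(1+K)$. First I would record the two obvious structural facts: $P(0)=1+K<0$ by hypothesis, and $P(f)\to+\infty$ as $f\to+\infty$ since $c^2>0$. By continuity there is therefore at least one positive root, which settles existence; the real content is uniqueness.

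For uniqueness I would study the sign of the derivative on $(0,\infty)$. We have $P'(f)=3c^2f^2+6f=3f(c^2f+2)$, which is strictly positive for every $f>0$ (both factors being positive there, as $c\neq 0$). Hence $P$ is strictly increasing on $[0,\infty)$, so it can vanish at most once on that interval. Combined with the previous paragraph, $P$ has exactly one root in $(0,\infty)$, and this root depends smoothly on $K$ (hence on $p$) by the implicit function theorem, since $P'\neq 0$ there — this is what licenses us to speak of a well-defined function $f$ on $M^2$.

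I do not expect any genuine obstacle here: the argument is the elementary sign analysis of a cubic, and the hypothesis $1+K<0$ together with $c\neq 0$ is exactly what makes both the $P(0)<0$ step and the monotonicity step go through. The only point deserving a word of care is that we are \emph{defining} $f$ from $K$ and $c$, reversing the earlier viewpoint in which $K$ was computed from $f$; but since \eqref{relation-K-f} is precisely \eqref{polynomialEquation} rewritten, the $f$ produced here coincides on any $PNMC$ biconservative surface with the mean curvature function, so the change of interpretation is consistent. If one wishes to avoid the implicit function theorem, smoothness of $f$ can instead be read off from Cardano's formula for the unique real root, but the monotonicity argument is cleaner and suffices for the statement as given.
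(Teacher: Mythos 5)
Your argument is correct and is essentially the paper's proof: the paper also reduces to the strict monotonicity of the cubic $x\mapsto -1-3x^2-c^2x^3$ on $(0,\infty)$, viewing it as a diffeomorphism onto $(-\infty,-1)$ so that $f=h^{-1}\circ K$ is the unique positive (and smooth) solution, which is the same content as your IVT-plus-monotonicity step with smoothness via the inverse (rather than implicit) function theorem. No gap; the packaging is the only difference.
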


\begin{proof}
	Let $p_0\in M^2$ be an arbitrarily fixed point and consider the polynomial equation
	\begin{equation}\label{eqKx}
		K\left(p_0\right)=-1-3x^2-c^2x^3, \qquad x>0.
	\end{equation}
	Consider the function $h:(0,\infty)\to(-\infty, -1)$ given by 
	$$
	h(x)=-1-3x^2-c^2x^3.
	$$
	Clearly, $h$ is smooth and $h'(x)<0$ for any $x>0$. As
	$$
	\lim_{x\searrow 0}h(x)=-1 \qquad \text{and} \qquad \lim_{x\nearrow\infty} h(x)=-\infty,
	$$
	we get that $h$ is a smooth diffeomorphism. Therefore, the solution of \eqref{eqKx} is $x=h^{-1}\left(K\left(p_0\right)\right)$, i.e., $f\left(p_0\right)=h^{-1}\left(K\left(p_0\right)\right)$. Thus, $f=h^{-1}\circ K$ is a smooth function being the composition of two smooth functions, it is positive and the unique solution of \eqref{polynomialEquation}.
\end{proof}

As we already notice, if $\left(M^2,g\right)$ admits a (unique) $PNMC$ biconservative immersion, then $c$ and thus $f$ are unique. We will see that the same phenomenon also occurs in this intrinsic approach.

First, we state the converse of Theorem \ref{first-intrinsic-charact}.

\begin{theorem}\label{converse-first-charact}
Let $\left(M^2,g\right)$ be an abstract surface. Assume that the Gaussian curvature $K$ of $M^2$ satisfies $1+K<0$, $\grad K\neq 0$ everywhere and the level curves of $K$ are circles of $M^2$ with positive constant signed curvature $\kappa$ given by 
\begin{equation}\label{kappa-invariant}
\kappa=-\frac{1}{4}\frac{|\grad K|}{K+f^2+1},
\end{equation}
where $f$ is the positive solution of equation \eqref{polynomialEquation} for some non-zero real constant $c$. Then, around any point of $M^2$ there exist positively oriented local coordinates $(u,v)$ such that the metric $g$ is given by 
$$
g(u,v)=du^2+\frac{1}{f^{3/2}(u)}dv^2,
$$
and $f$ satisfies \eqref{second-order-chart-f} and $f'>0$.
\end{theorem}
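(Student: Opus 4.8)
The plan is to reverse the reasoning of Theorems~\ref{thm:fundamentalProperties} and~\ref{abstract-to-existence-immersion}. By Lemma~\ref{lemma:uniqueSolution}, $f=h^{-1}\circ K$ is smooth and positive, and since $h'<0$ we have $\grad K=h'(f)\,\grad f$, so $\grad f$ is nowhere zero (because $\grad K\neq0$) and antiparallel to $\grad K$. Substituting $K=-1-3f^2-c^2f^3$ into \eqref{kappa-invariant} and simplifying (using $K+f^2+1=-f^2(2+c^2f)$ and $h'(f)=-3f(2+c^2f)$) gives the clean relation $\kappa=\tfrac34\,|\grad f|/f$. I would then take the frame of Theorem~\ref{first-intrinsic-charact}, namely $\tilde E_1=\grad K/|\grad K|=-\grad f/|\grad f|$ together with its positively oriented completion $\tilde E_2$; the level curves of $K$ are exactly the integral curves of $\tilde E_2$, parametrized by arc length, and the hypothesis (circles of signed curvature $\kappa$, with the orientation convention of Theorem~\ref{first-intrinsic-charact}) gives $\nabla_{\tilde E_2}\tilde E_2=-\kappa\tilde E_1$. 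Switching to $E_1=-\tilde E_1=\grad f/|\grad f|$ and $E_2=-\tilde E_2$ (still a positively oriented frame), we have $E_1f=|\grad f|>0$, $E_2f=0$, and $\nabla_{E_2}E_2=\kappa E_1$.

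Two facts must be extracted from the ``circle'' hypothesis. First, since the geodesic curvature $\kappa=\tfrac34|\grad f|/f$ is constant along each level curve and $f$ is constant there, $|\grad f|$ depends only on $f$, say $|\grad f|=\rho(f)$ with $\rho>0$; equivalently $E_2(E_1f)=0$. Second, writing the connection as $\nabla_X E_1=-\omega^1_2(X)\,E_2$, $\nabla_X E_2=\omega^1_2(X)\,E_1$, the relation $\nabla_{E_2}E_2=\kappa E_1$ reads $\omega^1_2(E_2)=\kappa$; then $[E_1,E_2]=\nabla_{E_1}E_2-\nabla_{E_2}E_1=\omega^1_2(E_1)\,E_1+\kappa E_2$, and evaluating on $f$ with $[E_1,E_2]f=E_1(E_2f)-E_2(E_1f)=0$ forces $\omega^1_2(E_1)\rho=0$, i.e.\ $\omega^1_2(E_1)=0$. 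Hence $\nabla_{E_1}E_1=\nabla_{E_1}E_2=0$, $\nabla_{E_2}E_1=-\kappa E_2$, $\nabla_{E_2}E_2=\kappa E_1$: exactly the connection \eqref{Levi-Civita-connection-f} with $\kappa=\tfrac34 (E_1f)/f$.

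From here the chart is built exactly as in the proof of Theorem~\ref{thm:fundamentalProperties}: because $\nabla_{E_1}E_1=0$ the integral curves of $E_1$ are geodesics, so fixing $p_0$, an integral curve $\sigma$ of $E_2$ through $p_0$, and the flow $\phi_{u_1}$ of $E_1$, the chart $X^f(u_1,v_1)=\phi_{u_1}(\sigma(v_1))$ satisfies $g_{11}=1$, $g_{12}(0,\cdot)=0$, then $g_{12}\equiv0$ (using $E_2(E_1f)=0$), then $f=f(u_1)$ with $f'>0$, then $\Gamma_{21}^2=-\tfrac34 f'/f$, and by integration $g_{22}=\alpha(v_1)f^{-3/2}$; the substitution $v=\int_0^{v_1}\sqrt{\alpha}$ yields $g=du^2+f^{-3/2}(u)\,dv^2$ with $(u,v)$ positively oriented. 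Finally, for this metric one computes (as in the proof of Theorem~\ref{abstract-to-existence-immersion}) $K=\bigl(12ff''-21(f')^2\bigr)/(16f^2)$; equating this with $K=-1-3f^2-c^2f^3$, which holds by the very definition of $f$, and clearing denominators gives precisely the ODE \eqref{second-order-chart-f}, while $f'>0$ is already known and $c\neq0$ by assumption.

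The step I expect to be the main obstacle is the extraction of the connection from the ``circle'' hypothesis: one must be sure it yields both $|\grad f|=\rho(f)$ — the fact that makes the commutator argument collapse $\nabla_{E_1}E_1$ to zero — and the correct sign $\nabla_{E_2}E_2=+\kappa E_1$. The opposite sign would produce the metric $du^2+f^{3/2}(u)\,dv^2$ and hence a different second-order equation for $f$, so the orientation bookkeeping (matching that of Theorem~\ref{first-intrinsic-charact}) is essential rather than cosmetic. Everything else is either a one-line substitution or a verbatim repetition of computations already in the paper.
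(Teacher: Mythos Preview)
Your proposal is correct and follows essentially the same route as the paper. Both arguments fix the frame $\tilde E_1=\grad K/|\grad K|$, use the circle hypothesis to obtain that $|\grad K|$ (equivalently $|\grad f|$) is constant along level curves, feed this into the commutator $[\tilde E_1,\tilde E_2]K=0$ to kill $\nabla_{\tilde E_1}\tilde E_1$, and then replicate the flow-based chart construction of Theorem~\ref{thm:fundamentalProperties}; your only departures are cosmetic---you pass to $E_i=-\tilde E_i$ at the outset and phrase everything in terms of $f$ rather than $K$, whereas the paper keeps the $\tilde E_i$ frame and performs the sign flip at the end via the change $(u_2,v_2)\mapsto(-u_2,-\!\int\!\sqrt\alpha)$.
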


\begin{proof}
First, let us consider $\left\{\tilde{E}_1,\tilde{E}_2\right\}$ the positively oriented global orthonormal frame field on $M^2$ defined by 
$$
\tilde{E}_1=\frac{\grad K}{|\grad K|}.
$$
It is clear that $\tilde{E}_2K=0$, i.e, the integral curves of $\tilde{E}_2$ are the level curves of $K$, so they are circles with curvature $\kappa$ given by \eqref{kappa-invariant}. Therefore,
$$
\nabla_{\tilde{E}_2}\tilde{E}_2=\kappa\left(-\tilde{E}_1\right), \qquad \text{ and } \qquad \nabla_{\tilde{E}_2}\left(-\tilde{E}_1\right)=-\kappa\tilde{E}_2.
$$
As $\tilde{E}_1=\grad K/ |\grad K|$, it is clear that $\tilde{E}_1 K=|\grad K|>0$ and $\tilde{E}_2 K=0$. Using \eqref{polynomialEquation}, it follows that $\tilde{E}_2f=0$ and then $\tilde{E}_2\left(\tilde{E}_1 K\right)=0$. Furthermore, as
\begin{align*}
\left[\tilde{E}_1,\tilde{E}_2\right]K & = \tilde{E}_1\left(\tilde{E}_2 K\right) -\tilde{E}_2\left(\tilde{E}_1 K\right)=0 \\
& = \left(\nabla_{\tilde{E}_1}\tilde{E}_2-\nabla_{\tilde{E}_2}\tilde{E}_1\right)(K), 
\end{align*}
it follows that $\left(\nabla_{\tilde{E}_1}\tilde{E}_2\right)(K)=0$, i.e., 
$$
\langle \nabla_{\tilde{E}_1}\tilde{E}_2,\grad K\rangle=0.
$$
Thus, $\langle \nabla_{\tilde{E}_1}\tilde{E}_2,\tilde{E}_1\rangle=0$. On the other hand, $\langle \nabla_{\tilde{E}_1}\tilde{E}_2,\tilde{E}_2\rangle=0$ and thus $\nabla_{\tilde{E}_1}\tilde{E}_2=0$. Further, it is easy to see that $\nabla_{\tilde{E}_1}\tilde{E}_1=0$.

Using \eqref{polynomialEquation} we rewrite the curvature $\kappa$ as
$$
\kappa=-\frac{3}{4}\frac{\tilde{E}_1 f}{f}.
$$
Using  again \eqref{polynomialEquation}, the Levi-Civita connection and the relation 
$$
d\omega_2^1\left(\tilde{E}_1,\tilde{E}_2\right)=K\left(\omega^1\wedge\omega^2\right)\left(\tilde{E}_1,\tilde{E}_2\right),
$$
by some straightforward computation we obtain 
\begin{equation}\label{secondDerivativefInvariant-tilde}
f\tilde{E}_1\left(\tilde{E}_1f\right) =\frac{7}{4}\left(\tilde{E}_1f\right)^2-\frac{4}{3}f^2-4f^4-\frac{4}{3}c^2f^5, 
\end{equation}
which can be seen that it is equivalent to equation \eqref{second-order-invariant-f}.

Let $p_0\in M^2$ be an arbitrarily fixed point of $M^2$ and let $\sigma=\sigma\left(v_2\right)$ be an integral curve of $\tilde{E}_2$ with $\sigma(0)=p_0$. Let $\left\{\phi_{u_2}\right\}_{u_2\in\mathbb{R}}$ be the flow of $\tilde{E}_1$. We define the following positively oriented local chart
$$
X^K\left(u_2,v_2\right)=\phi_{u_2}\left(\sigma\left(v_2\right)\right)=\phi_{\sigma\left(v_2\right)}\left(u_2\right).
$$
We have 
\begin{align*}
	&X^K\left(0,v_2\right)=\sigma\left(v_2\right),\\
	&X^K_{v_2}\left(0,v_2\right)=\sigma'\left(v_2\right)=\tilde{E}_2\left(\sigma\left(v_2\right)\right)=\tilde{E}_2\left(0,v_2\right), \\
	&X^K_{u_2}\left(u_2,v_2\right)=\phi'_{\sigma\left(v_2\right)}\left(u_2\right)=\tilde{E}_1\left(\phi_{\sigma\left(v_2\right)}\left(u_2\right)\right)=\tilde{E}_1\left(u_2,v_2\right)
\end{align*}
for any $\left(u_2,v_2\right)$. Clearly, $\left\{X^K_{u_2},X^K_{v_2}\right\}$ is positively oriented and as $\tilde{E}_1 K>0$, we also have $X^K_{u_2} K>0$.

If we write the Riemannian metric $g$ on $M^2$ in local coordinates as
$$
g=\langle \cdot, \cdot \rangle=g_{11}\ du_2^2+2g_{12}\ du_2\  dv_2+g_{22}\  dv_2^2,
$$
we get $g_{11}=\left|X^K_{u_2}\right|^2=\left|\tilde{E}_1\right|^2=1$, $g_{22}\left(0,v_2\right)=1$ and $g_{12}\left(0,v_2\right)=0$. Then, in a similar way, as in the proof of Theorem \ref{thm:fundamentalProperties}, we achieve
$$
\tilde{E}_2=\frac{1}{\sqrt{\det G}}\left(-g_{12}X^K_{u_2}+X^K_{v_2}\right),
$$
where 
\begin{equation*}
	G=G\left(u_2,v_2\right)=\left(
	\begin{array}{cc}
		g_{11} & g_{12} \\
		g_{12}  & g_{22}
	\end{array}
	\right).
\end{equation*}
Clearly, $\det G\left(0,v_2\right)=1$. As $\tilde{E}_2 K=0$ we get $X^K_{v_2} K=g_{12}X^K_{u_2}K$ and  $\tilde{E}_2\left(\tilde{E}_1 K\right)=0$.  By some standard computations we can prove that the last relation is equivalent to $X^K_{u_2}g_{12}=0$, so $g_{12}=g_{12}\left(v_2\right)=0$. Therefore, 
$$
\tilde{E}_{2}=\frac{1}{\sqrt{g_{22}}}X^K_{v_2}.
$$
Since $\tilde{E}_1K>0$ and $\tilde{E}_2K=0$, it follows that $X^K_{u_2} K>0$ and $X^K_{v_2}\ K=0$ and so
$$
K\left(u_2,v_2\right)=K\left(u_2,0\right)=K\left(u_2\right) \text{ with }K'\left(u_2\right)>0.
$$
Using a similar process as in the proof of Theorem \ref{thm:fundamentalProperties}, we achieve
$$
g_{22}\left(u_2,v_2\right)=\frac{\alpha\left(v_2\right)}{f^{3/2}\left(u_2\right)},
$$
where $\alpha$ is a smooth function strictly positive.

Thus, the metric $g$ can be written as
$$
g\left(u_2,v_2\right)=du_2^2+\frac{\alpha\left(v_2\right)}{f^{3/2}\left(u_2\right)}dv_2^2
$$
and 
$$
\tilde{E}_2=\tilde{E}_2\left(u_2,v_2\right)=\frac{f^{3/4}\left(u_2\right)}{\sqrt{\alpha\left(v_2\right)}}X^K_{v_2}.
$$
Further, we consider the change of coordinates 
$$
\left(u_2,v_2\right) \to \left(u=-u_2,v=-\int_{0}^{v_2} \sqrt{\alpha(\tau)}\ d\tau \right).
$$
In these new local coordinates, $f=f(u)$ satisfies $f'(u)>0$, the metric $g$ is given by
$$
g\left(u,v\right)=du^2+\frac{1}{f^{3/2}\left(u\right)}dv^2,
$$
and the vector fields $\tilde{E}_1$, $\tilde{E}_2$ are given by
\begin{equation}\label{E1E2-third chart}
\tilde{E}_1=-\frac{\partial}{\partial u} \qquad\text{ and }\qquad \tilde{E}_2=-f^{3/4}(u)\frac{\partial}{\partial v}.
\end{equation}
Replacing $\tilde{E}_1$ and $\tilde{E}_2$ from \eqref{E1E2-third chart} in \eqref{secondDerivativefInvariant-tilde}, by some straightforward computations we get that $f$ satisfies the second order ODE \eqref{second-order-chart-f}.
\end{proof}

\begin{remark}
From \eqref{kappa-invariant}, we see that $f$, and then $c$, are uniquely determined by the abstract surface $\left(M^2,g\right)$.
\end{remark}

From Theorems \ref{first-intrinsic-charact} and \ref{converse-first-charact} we conclude with the following characterization of the abstract surfaces $\left(M^2,g\right)$ with $g$ given by \eqref{expression-metricg-uv}.

\begin{theorem}\label{second-charact-bicons-surfaces-intrinsic}
Let $\left(M^2,g\right)$ be an abstract surface. Then, around any point of $M^2$ there exist positively oriented local coordinates $(u,v)$ such that the metric $g$ is given by 
$$
g(u,v)=du^2+\frac{1}{f^{3/2}(u)}dv^2,
$$
where $f=f(u)$ is some positive solution of the second order ODE \eqref{second-order-chart-f} with some non-zero real constant $c$, satisfying $f'(u)>0$, if and only if the Gaussian curvature $K$ of $M^2$ satisfies $1+K<0$, $\grad K\neq 0$ at any point of $M^2$ and the level curves of $K$ are circles of $M^2$ with positive constant signed curvature $\kappa$ given by 
\begin{equation*}
	\kappa=-\frac{1}{4}\frac{|\grad K|}{K+f^2+1},
\end{equation*}
where, now, $f$ is the positive solution of equation \eqref{polynomialEquation}.
\end{theorem}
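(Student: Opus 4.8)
The plan is to obtain Theorem~\ref{second-charact-bicons-surfaces-intrinsic} as a formal consequence of the two preceding results, Theorems~\ref{first-intrinsic-charact} and~\ref{converse-first-charact}; the only genuine point is to reconcile the two a priori different meanings carried by the symbol $f$.

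First I would treat the direct implication. Assuming that around any point the metric has the form $g(u,v)=du^2+f^{-3/2}(u)\,dv^2$ with $f=f(u)>0$ a solution of the ODE \eqref{second-order-chart-f} for some non-zero constant $c$ and $f'(u)>0$, I would invoke Theorem~\ref{first-intrinsic-charact} to conclude at once that $K$ satisfies \eqref{relation-K-f}, that $1+K<0$ and $\grad K\neq 0$ everywhere, and that the level curves of $K$ are circles with positive constant signed curvature $\kappa=-\frac{1}{4}\,|\grad K|/(K+f^2+1)$. The one thing that still needs a word is that the $f$ appearing here is precisely ``the positive solution of \eqref{polynomialEquation}'': since \eqref{relation-K-f} is literally equation \eqref{polynomialEquation}, the ODE solution $f$ does solve \eqref{polynomialEquation}, and being positive it is, by Lemma~\ref{lemma:uniqueSolution}, the unique such solution. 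Hence the two readings of $f$ coincide and the ``only if'' direction is done.

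For the converse I would start from the hypotheses that $K$ satisfies $1+K<0$, that $\grad K\neq 0$ everywhere, and that the level curves of $K$ are circles with positive constant signed curvature $\kappa=-\frac{1}{4}\,|\grad K|/(K+f^2+1)$, where now $f$ is the positive solution of \eqref{polynomialEquation} for some non-zero constant $c$. By Lemma~\ref{lemma:uniqueSolution} this $f$ is a well-defined smooth positive function, so the hypotheses of Theorem~\ref{converse-first-charact} hold verbatim; applying that theorem yields, around any point, positively oriented local coordinates $(u,v)$ in which $g(u,v)=du^2+f^{-3/2}(u)\,dv^2$ with $f=f(u)$ satisfying \eqref{second-order-chart-f} and $f'(u)>0$, which is exactly what the ``if'' direction asks for.

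The step I expect to be the only delicate one --- and it is bookkeeping rather than analysis --- is this identification of the two roles of $f$: on the metric side $f$ is introduced analytically as a solution of the second order ODE \eqref{second-order-chart-f}, while on the curvature side it must be recovered algebraically from $K$ through \eqref{polynomialEquation}. Matching them relies on the fact, already extracted in the proof of Theorem~\ref{abstract-to-existence-immersion}, that every solution $f$ of the ODE satisfies the Gauss relation \eqref{relation-K-f}, together with the uniqueness in Lemma~\ref{lemma:uniqueSolution}. Once this observation is in place, no further computation remains.
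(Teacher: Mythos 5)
Your proposal is correct and matches the paper's route exactly: the paper states this theorem as a direct consequence of Theorems \ref{first-intrinsic-charact} and \ref{converse-first-charact}, with no separate proof given. Your additional remark identifying the ODE-solution $f$ with the positive root of \eqref{polynomialEquation} via \eqref{relation-K-f} and Lemma \ref{lemma:uniqueSolution} is a sound (and implicitly assumed) piece of bookkeeping, not a deviation.
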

 
A second characterization of the metric $g$ given by \eqref{expression-metricg-uv}, which actually follows from the above proofs, can be given in terms of its Levi-Civita connection. 

\begin{theorem}\label{second-intrinsic-theorem-levi-civita}
Let $\left(M^2,g\right)$ be an abstract surface. Then, around any point of $M^2$ there exist positively oriented local coordinates $(u,v)$ such that the metric $g$ has the expression
$$
g(u,v)=du^2+\frac{1}{f^{3/2}(u)}dv^2,
$$
where $f=f(u)$ is some positive solution of the second order ODE \eqref{second-order-chart-f} with some non-zero real constant $c$, satisfying $f'(u)>0$, if and only if the Gaussian curvature $K$ of $M^2$ satisfies $1+K<0$, $\grad K\neq 0$ at any point of $M^2$ and the Levi-Civita connection on $M$ is given by 
$$
\nabla_{\tilde{E}_1}\tilde{E}_1=\nabla_{\tilde{E}_1}\tilde{E}_2=0, \quad \nabla_{\tilde{E}_2}\tilde{E}_1=-\frac{3}{4}\frac{\tilde{E}_1 f}{f}\tilde{E}_2, \quad \nabla_{\tilde{E}_2}\tilde{E}_2=\frac{3}{4}\frac{\tilde{E}_1 f}{f}\tilde{E}_1,
$$
where  $\left\{\tilde{E}_1,\tilde{E}_2\right\}$ is the positively oriented global orthonormal frame field on $M^2$ defined by
$$
\tilde{E}_1=\frac{\grad K}{|\grad K|},
$$
and, now, $f$ is is the positive solution of equation \eqref{polynomialEquation}.
\end{theorem}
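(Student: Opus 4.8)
The plan is to prove the two implications separately, reducing each to material already established, since under the standing hypotheses $1+K<0$ and $\grad K\neq 0$ the condition on the Levi-Civita connection is precisely the ``connection form'' of the statement that the level curves of $K$ are circles with signed curvature \eqref{kappa-invariant}; thus the theorem will follow from the same circle of ideas used for Theorem \ref{first-intrinsic-charact} and Theorem \ref{converse-first-charact} (equivalently, one could simply invoke Theorem \ref{second-charact-bicons-surfaces-intrinsic} and only verify the equivalence of the two conditions on $\kappa$).

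For the direct implication, assume $g(u,v)=du^2+f^{-3/2}(u)\,dv^2$ with $f=f(u)>0$ a solution of \eqref{second-order-chart-f}. By the computation already carried out in the proof of Theorem \ref{abstract-to-existence-immersion}, the Gaussian curvature is $K=K(u)=-1-3f^2-c^2f^3$, so $1+K<0$ and $\grad K\neq 0$ everywhere, and with respect to $\tilde{E}_1=\grad K/|\grad K|=-\partial/\partial u$, $\tilde{E}_2=-f^{3/4}(u)\,\partial/\partial v$ the Levi-Civita connection is $\nabla_{\tilde{E}_1}\tilde{E}_1=\nabla_{\tilde{E}_1}\tilde{E}_2=0$, $\nabla_{\tilde{E}_2}\tilde{E}_1=\frac34(f'/f)\tilde{E}_2$, $\nabla_{\tilde{E}_2}\tilde{E}_2=-\frac34(f'/f)\tilde{E}_1$. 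Since $\tilde{E}_1 f=-f'$, we have $\frac34(f'/f)=-\frac34(\tilde{E}_1 f/f)$, so this is exactly the displayed connection; moreover $f$ satisfies $1+K+3f^2+c^2f^3=0$ with $f>0$, hence it is the unique positive solution of \eqref{polynomialEquation} by Lemma \ref{lemma:uniqueSolution}.

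For the converse, assume $1+K<0$, $\grad K\neq 0$, and that the Levi-Civita connection has the displayed form, where $f>0$ is the function determined by \eqref{polynomialEquation}. From $\tilde{E}_1=\grad K/|\grad K|$ we get $\tilde{E}_2 K=0$; differentiating \eqref{polynomialEquation} along $\tilde{E}_2$ and using $f>0$ (so $6f+3c^2f^2>0$) gives $\tilde{E}_2 f=0$, and then, since the displayed connection yields $[\tilde{E}_1,\tilde{E}_2]=\frac34(\tilde{E}_1 f/f)\tilde{E}_2$, also $\tilde{E}_2(\tilde{E}_1 f)=0$ and $\tilde{E}_2(\tilde{E}_1 K)=0$. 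Inserting the displayed connection into the structure equation $d\omega^1_2(\tilde{E}_1,\tilde{E}_2)=K(\omega^1\wedge\omega^2)(\tilde{E}_1,\tilde{E}_2)$ together with \eqref{polynomialEquation} reproduces \eqref{secondDerivativefInvariant-tilde}, exactly as in the proof of Theorem \ref{converse-first-charact}. Then we run the same chart construction: since $\nabla_{\tilde{E}_1}\tilde{E}_1=0$, the flow $\{\phi_{u_2}\}$ of $\tilde{E}_1$ consists of geodesics, and $X^K(u_2,v_2)=\phi_{u_2}(\sigma(v_2))$ with $\sigma$ an integral curve of $\tilde{E}_2$ is a positively oriented chart in which $g_{11}=1$; the relation $\tilde{E}_2(\tilde{E}_1 K)=0$ translates into $X^K_{u_2}g_{12}=0$, so $g_{12}=0$, and reading off $\Gamma^2_{21}=-\frac34(f'/f)$ from the displayed connection and integrating gives $g_{22}=\alpha(v_2)/f^{3/2}(u_2)$. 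Absorbing $\alpha$ by $v=\int_0^{v_2}\sqrt{\alpha(\tau)}\,d\tau$ and setting $u=-u_2$ yields $g=du^2+f^{-3/2}(u)\,dv^2$ with $f'(u)>0$ and $\tilde{E}_1=-\partial/\partial u$, $\tilde{E}_2=-f^{3/4}(u)\,\partial/\partial v$; substituting into \eqref{secondDerivativefInvariant-tilde} gives \eqref{second-order-chart-f}.

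I expect no genuine obstacle: the argument is bookkeeping on top of Theorems \ref{first-intrinsic-charact} and \ref{converse-first-charact}. The only delicate points are the sign conventions in the choice of $\tilde{E}_2$ and in the final substitution $u=-u_2$, and the algebraic passage, via \eqref{polynomialEquation}, between the coefficient $-\frac34(\tilde{E}_1 f/f)$ in the connection and the expression $-\frac14|\grad K|/(K+f^2+1)$ of \eqref{kappa-invariant}: from \eqref{polynomialEquation} one gets $2+c^2f=-(K+f^2+1)/f^2>0$, hence $K+f^2+1<0$ and, differentiating \eqref{polynomialEquation} along $\tilde{E}_1$, $\tilde{E}_1 f=f|\grad K|/\bigl(3(K+f^2+1)\bigr)$, so that $-\frac34(\tilde{E}_1 f/f)=-\frac14|\grad K|/(K+f^2+1)>0$. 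This simultaneously shows the connection condition is equivalent to the circle condition of Theorem \ref{second-charact-bicons-surfaces-intrinsic}, which is the shortcut route should one prefer to quote that theorem rather than redo the chart construction.
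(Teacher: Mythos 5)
Your proposal is correct and takes essentially the same route as the paper, which gives no separate proof but notes the theorem "follows from the above proofs": you reduce the direct implication to the computations in Theorems \ref{abstract-to-existence-immersion}/\ref{first-intrinsic-charact}, rerun (or quote) the chart construction of Theorem \ref{converse-first-charact} for the converse, and correctly verify via \eqref{polynomialEquation} that the connection coefficient $-\tfrac{3}{4}\tilde{E}_1f/f$ coincides with the circle curvature \eqref{kappa-invariant}. The only slip is cosmetic: to keep the final chart positively oriented (and to have $\tilde{E}_2=-f^{3/4}(u)\,\partial/\partial v$) you should set $v=-\int_0^{v_2}\sqrt{\alpha(\tau)}\,d\tau$, as in the paper, rather than $+\int_0^{v_2}\sqrt{\alpha(\tau)}\,d\tau$.
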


The last characterization of the abstract surfaces $\left(M^2,g\right)$ with $g$ given by \eqref{expression-metricg-uv}, is related to the curvature $\kappa$ of the level curves of the Gaussian curvature $K$. We show that $\kappa$ satisfies a third order ODE and it completely determines our abstract surfaces. In particular, it determines $f$ and $c$. We may say that the third order ODE describes the evolution of the level curves of $K$.

For the direct implication we can state the next theorem.

\begin{theorem}\label{third-characterization-first-implication}
Let $\left(M^2,g\right)$ be an abstract surface. Assume that around any point of $M^2$ there exist positively oriented local coordinates $(u,v)$ such that the metric $g$ is given by
$$
g(u,v)=du^2+\frac{1}{f^{3/2}(u)}dv^2,
$$
where $f=f(u)$ is some positive solution of the second order ODE \eqref{second-order-chart-f} with some non-zero real constant $c$, satisfying $f'(u)>0$. Let $\kappa$ be the positive signed curvature of the level curves of the Gaussian curvature $K$. Then,  
\begin{equation}\label{g-kappa}
	g(u,v)=du^2+\frac{1}{e^{2\int_0^u \kappa(\tau) d\tau}}dv^2.
\end{equation}
Moreover, $\kappa=\kappa(u)$ is the positive solution of the third order ODE 
\begin{equation}\label{eq-kappa}
3\kappa(u)\kappa'''(u)-26\kappa^2(u)\kappa''(u)-3\kappa'(u)\kappa''(u)+72\kappa^3(u)\kappa'(u)+32\kappa^3(u)-32 \kappa^5(u)=0
\end{equation}
and satisfies the conditions 
\begin{equation}\label{conditions-kappa}
\left\{
\begin{array}{l}
\kappa(u)>0 \\
\kappa'(u)<-1+\kappa^2(u) \\
2\kappa(u)\left(3\kappa'(u)-2\kappa^2(u)+2\right)<\kappa''(u)<\frac{2}{3}\kappa(u)\left(7\kappa'(u)-4\kappa^2(u)+4\right)
\end{array}
\right.,
\end{equation}
for any $u$.
\end{theorem}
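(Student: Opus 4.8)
The starting point is item (v) of Theorem~\ref{thm:fundamentalProperties} (equivalently Theorem~\ref{first-intrinsic-charact}): in the coordinates $(u,v)$ with $g=du^2+f^{-3/2}(u)\,dv^2$ the level curves of $K$ are the circles $u=\text{const}$, and their positive signed curvature is $\kappa(u)=\tfrac34 f'(u)/f(u)$. First I would record this as the identity $\kappa = \tfrac34(\ln f)'$, which integrates to $f^{3/2}(u) = f^{3/2}(0)\exp\!\big(2\int_0^u\kappa(\tau)\,d\tau\big)$; absorbing the constant $f(0)$ into the $v$-coordinate (a rescaling $v\mapsto cv$, allowed since $v$ is only determined up to such a change, cf. the proof of Proposition~\ref{prop-isometry}) yields \eqref{g-kappa}. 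This also shows $\kappa$ is an honest intrinsic invariant of the metric, so any ODE it satisfies is a genuine obstruction.

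\textbf{Deriving the ODE.} The next step is to translate the second-order ODE \eqref{second-order-chart-f} for $f$ into an ODE for $\kappa$. From $\kappa=\tfrac34 f'/f$ one gets $f'=\tfrac43\kappa f$, and differentiating, $f''=\tfrac43\kappa' f+\tfrac43\kappa f'=\big(\tfrac43\kappa'+\tfrac{16}{9}\kappa^2\big)f$. Substituting these into $f''f-\tfrac74(f')^2+\tfrac43 f^2+4f^4+\tfrac43 c^2 f^5=0$ and dividing by $f^2$ gives
\begin{equation}\label{eq-kappa-intermediate}
\tfrac43\kappa'-\tfrac49\kappa^2+\tfrac43+4f^2+\tfrac43 c^2 f^3=0,
\end{equation}
i.e. a relation expressing $f^2$ (and $c^2f^3$) in terms of $\kappa,\kappa'$ together with the Gauss equation $K=-1-3f^2-c^2f^3$. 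The problem is that \eqref{eq-kappa-intermediate} still carries the two unknowns $f$ and $c$. To eliminate them I would differentiate \eqref{eq-kappa-intermediate} once more (using $f'=\tfrac43\kappa f$) to get a second relation linear in $f^2$ and $c^2f^3$, namely something of the form $\tfrac43\kappa''-\tfrac89\kappa\kappa' + 8\cdot\tfrac43\kappa f^2 + 2c^2 f^3\kappa=0$ after dividing appropriately; then \eqref{eq-kappa-intermediate} and its derivative form a linear system in $(f^2,\ c^2f^3)$ whose solution expresses both as rational functions of $\kappa,\kappa',\kappa''$. Differentiating \emph{once more} and substituting these expressions back gives a relation purely in $\kappa,\kappa',\kappa'',\kappa'''$ — this is \eqref{eq-kappa}. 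The factor of $3\kappa$ multiplying $\kappa'''$ and the coefficient pattern ($-26\kappa^2\kappa''$, $72\kappa^3\kappa'$, $\pm 32\kappa^3$ versus $32\kappa^5$) are exactly what one expects from clearing the denominators produced by Cramer's rule on that $2\times2$ system, so I would carry the elimination through carefully and check the coefficients match.

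\textbf{The inequalities.} Finally, the conditions \eqref{conditions-kappa} must be read off from the sign constraints $f>0$, $c^2>0$, $c\neq0$, and $1+K<0$. The bound $\kappa>0$ is immediate. From \eqref{eq-kappa-intermediate}, the quantity $4f^2$ equals $-\tfrac43\kappa'+\tfrac49\kappa^2-\tfrac43 - \tfrac43 c^2f^3$; using $f^2>0$ and $c^2f^3>0$ to drop the last (negative) term gives $f^2 < \tfrac13(-\kappa'+\tfrac13\kappa^2-1)$... more to the point, combining $1+3f^2+c^2f^3 = -K >0$ with \eqref{eq-kappa-intermediate} and the corresponding relation from the Gauss equation $K=-1-3f^2-c^2f^3$ yields the two-sided estimate: the left inequality $2\kappa(3\kappa'-2\kappa^2+2)<\kappa''$ and the right one $\kappa''<\tfrac23\kappa(7\kappa'-4\kappa^2+4)$ should come out of imposing $f^2>0$ and $c^2f^3>0$ respectively on the solved expressions for $f^2$ and $c^2f^3$ obtained in the previous step (each is a linear-in-$\kappa''$ expression, so positivity is a one-sided bound on $\kappa''$), while $\kappa'<-1+\kappa^2$ is the reduction of the same positivity statements modulo the leading terms, or equivalently $-K>1$ forced through \eqref{eq-kappa-intermediate}. \textbf{The main obstacle} I anticipate is purely computational bookkeeping: the successive differentiations and the Cramer's-rule elimination of $(f^2,c^2f^3)$ produce long rational expressions, and getting the integer coefficients in \eqref{eq-kappa} and the precise boundary expressions in \eqref{conditions-kappa} to come out exactly right requires disciplined algebra rather than any conceptual insight — there is no hidden difficulty beyond making sure no factor of $\kappa$ or numerical coefficient is dropped when clearing denominators.
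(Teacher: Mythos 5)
Your proposal is correct and takes essentially the same route as the paper: both start from $\kappa=\tfrac34 f'/f$ (Theorem \ref{first-intrinsic-charact}), integrate and absorb the constant into $v$ to get \eqref{g-kappa}, eliminate $f$ and the constant $c$ by successive differentiation of the relation obtained from \eqref{second-order-chart-f} (the paper differentiates the constant $c^2=\left(\kappa^2-\kappa'-1\right)/f^3-3/f$ and solves for $f^2$, which is exactly the elimination your Cramer's-rule system performs, leading to $f^2=\left(\kappa''-6\kappa\kappa'-4\kappa+4\kappa^3\right)/(4\kappa)$ and then to \eqref{eq-kappa}), and read off \eqref{conditions-kappa} from $f^2>0$, $c^2f^3>0$ and $1+K<0$. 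Only fix the arithmetic slip in your intermediate relation when you carry this out: dividing \eqref{second-order-chart-f} by $f^2$ gives $\tfrac43\kappa'-\tfrac43\kappa^2+\tfrac43+4f^2+\tfrac43 c^2f^3=0$ (the coefficient of $\kappa^2$ is $-\tfrac43$, not $-\tfrac49$), i.e. $\kappa'-\kappa^2=K=-1-3f^2-c^2f^3$, and the coefficients of its $u$-derivative need the corresponding correction.
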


\begin{proof}
Using Theorem \ref{second-charact-bicons-surfaces-intrinsic}, it follows that $1+K<0$, $\grad K\neq 0$ at any point of $M^2$, the level curves of $K$ are circles of $M^2$ with positive constant signed curvature $\kappa$ given by \eqref{kappa-invariant} and $f$ is also the positive solution of \eqref{polynomialEquation}.

Therefore, $|\grad K|=-K'(u)$ and we can rewrite \eqref{kappa-invariant} as
\begin{equation}\label{kappa-f'andf}
	\kappa=\kappa(u)=\frac{3}{4}\frac{f'(u)}{f(u)}.
\end{equation}
In order to obtain the ODE that is satisfied by $\kappa$, we make some standard computations. First, we compute $\kappa'$ and obtain 
\begin{align*}
\kappa'(u) & =\frac{3}{4}\left(\frac{f''(u)}{f(u)}-\left(\frac{f'(u)}{f(u)}\right)^2\right)\\
& =\frac{3}{4}\frac{f''(u)}{f(u)}-\frac{4}{3} \kappa^2(u).
\end{align*}
Thus,
\begin{equation}\label{f''/f}
\frac{f''(u)}{f(u)}=\frac{4}{3}\kappa'(u)+\frac{16}{9}\kappa^2(u).
\end{equation}
Then, from \eqref{second-order-chart-f} and \eqref{f''/f}, we get
\begin{equation}\label{first-expression-c}
c^2=\frac{\kappa^2(u)-\kappa'(u)-1}{f^3(u)}-\frac{3}{f(u)}.
\end{equation}
As the next step, deriving the above relation and using again \eqref{f''/f}, it follows
\begin{equation}\label{eq-useful f2}
-\kappa''(u)+6\kappa(u)\kappa'(u)+4\kappa(u)-4\kappa^3(u)+4\kappa(u) f^2(u)=0.
\end{equation} 
Further, if we derive \eqref{eq-useful f2} and use again \eqref{f''/f}, one gets
$$
-3\kappa'''(u)+18\kappa'^2(u)+18\kappa(u)\kappa''(u)+12\kappa'(u)-36\kappa^2(u)\kappa'(u)+4\left(3\kappa'(u)+8\kappa^2(u)\right)f^2(u)=0.
$$  
At the end, we replace $f^2(u)$ from \eqref{eq-useful f2} in the above equation and obtain \eqref{eq-kappa}.

In the following, we will obtain the conditions that $\kappa$ must satisfy.

First, as $f=f(u)$ and $f'=f'(u)$ are both positive functions, from \eqref{kappa-f'andf}, we get $\kappa=\kappa(u)>0$ for any $u$.

Second, if we integrate \eqref{kappa-f'andf} we get
\begin{equation*}\label{f-integral-kappa}
\frac{1}{f^{3/4}(u)}=\frac{A}{e^{\int_{0}^u \kappa(\tau)d\tau}},
\end{equation*}
where $A$ is a positive real constant. Performing a further simple change of coordinates, we can assume $A=1$ and the metric takes the form \eqref{g-kappa}. Therefore, we get
\begin{equation}\label{K-and-kappa}
K(u)=-\kappa^2(u)+\kappa'(u).
\end{equation} 
We know that $1+K(u)<0$, so 
\begin{equation*}
\kappa'(u)<-1+\kappa^2(u),
\end{equation*}  	
for any $u$.

Third, if we derive \eqref{K-and-kappa} and use $K'(u)<0$, we obtain
\begin{equation}\label{k''-first-inequality}
	\kappa''(u)<2\kappa(u)\kappa'(u),
\end{equation}  	
for any $u$.

We note that in the local coordinates $(u,v)$, equation \eqref{kappa-invariant} becomes
$$
\kappa(u)=\frac{K'(u)}{4\left(K(u)+f^2(u)+1\right)}.
$$  
Therefore, 
$$
f^2(u)=\frac{K'(u)-4\kappa(u)(1+K(u))}{4\kappa(u)}
$$
and using \eqref{K-and-kappa}, we achieve
\begin{equation}\label{link-f-kappa}
f^2(u)=\frac{\kappa''(u)-6\kappa(u)\kappa'(u)-4\kappa(u)+4\kappa^3(u)}{4\kappa(u)}>0,
\end{equation}
and therefore
\begin{equation*}
\kappa''(u)>2\kappa(u)\left(3\kappa'(u)-2\kappa^2(u)+2\right),
\end{equation*}
for any $u$.
We can replace $f(u)$ from \eqref{link-f-kappa} in \eqref{first-expression-c}, and obtain
\begin{equation}\label{second-expresion-c-kappa}
c^2=\frac{2\sqrt{\kappa(u)}\left(-3\kappa''(u)+14\kappa(u)\kappa'(u)+8\kappa(u)-8\kappa^3(u)\right)}{\left(\kappa''(u)-6\kappa(u)\kappa'(u)-4\kappa(u)+4\kappa^3(u)\right)^{3/2}}>0,
\end{equation}
and thus
\begin{equation*}
\kappa''(u)<\frac{2}{3}\kappa(u)\left(7\kappa'(u)-4\kappa^2(u)+4\right),
\end{equation*}
which is a stronger condition than \eqref{k''-first-inequality}, since $\kappa'(u)<-1+\kappa^2(u)$.

\end{proof}

We continue with the converse of Theorem \ref{third-characterization-first-implication}, obtaining in this way the third intrinsic characterization. First, inspired by the proof of Theorem \ref{third-characterization-first-implication}, we introduce the functions 
\begin{equation}\label{function c}
	\mathfrak{A}(u)= \frac{2\sqrt{\kappa(u)}\left(-3\kappa''(u)+14\kappa(u)\kappa'(u)+8\kappa(u)-8\kappa^3(u)\right)}{\left(\kappa''(u)-6\kappa(u)\kappa'(u)-4\kappa(u)+4\kappa^3(u)\right)^{3/2}},
\end{equation}
and
\begin{equation}\label{function-f^2}
\mathfrak{B}(u)= \frac{\kappa''(u)-6\kappa(u)\kappa'(u)-4\kappa(u)+4\kappa^3(u)}{4\kappa(u)},
\end{equation}
defined on an open interval which contains $0$. Now, we can prove the following result.
\begin{theorem}\label{third-characterization-converse-implication}
	Consider the third order ODE \eqref{eq-kappa} and the initial conditions
\begin{equation}\label{initial-conditions}
\left\{
\begin{array}{l}
\kappa(0)>0 \\
\kappa'(0)=\kappa_0' \\
\kappa''(0)=\kappa''_{0}
\end{array}
\right.
\end{equation}
such they satisfy 
\begin{equation*}
	\left\{
	\begin{array}{l}
		\kappa_0>0 \\
		\kappa'_0<-1+\kappa^2_0 \\
		2\kappa_0\left(3\kappa'_0-2\kappa^2_0+2\right)<\kappa''_0<\frac{2}{3}\kappa_0\left(7\kappa'_0-4\kappa^2_0+4\right)
	\end{array}
	\right..
\end{equation*}
Let $\kappa=\kappa(u)$ be the solution of \eqref{eq-kappa} and \eqref{initial-conditions} and assume that the inequalities \eqref{conditions-kappa} hold for any $u$ in some open interval containing $0$.
Consider $\left(M^2,g\right)$ an abstract surface with the metric $g$ given in some local coordinates $(u,v)$ by \eqref{g-kappa}.
Then, 
\begin{itemize}
\item [(i)] the Gaussian curvature of $M^2$ satisfies the relation
$$
K(u)=-\kappa^2(u)+\kappa'(u),
$$ 
and the conditions $1+K(u)<0$ and $\grad K\neq 0$ everywhere;
\item [(ii)] $\mathfrak{A}$ defined in \eqref{function c} is a real positive constant, $\mathfrak{A}=c^2$;  
\item[(iii)] $\mathfrak{B}$ defined in \eqref{function-f^2} is a positive function and if we denote by $f(u)=\sqrt{\mathfrak{B}(u)}$, we have
$$
K(u)=-1-3f^2(u)-c^2f^3(u);
$$
\item[(iv)] the level curves of $K$ are circles of $M^2$ with positive constant signed curvature $\kappa$, and $\kappa$ satisfies
$$
\kappa=-\frac{1}{4}\frac{|\grad K|}{K+f^2+1},
$$ 
where $f$ is given in $(iii)$.
\end{itemize}
\end{theorem}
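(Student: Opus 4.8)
The plan is to reverse-engineer the computations in the proof of Theorem \ref{third-characterization-first-implication}, working from the ODE \eqref{eq-kappa} back to the geometric data. Throughout I would fix the positively oriented orthonormal frame $\left\{\tilde{E}_1,\tilde{E}_2\right\}$ adapted to the metric \eqref{g-kappa}, namely $\tilde{E}_1=-\partial/\partial u$ (so that, a posteriori, $\tilde{E}_1=\grad K/|\grad K|$) and $\tilde{E}_2=-e^{\int_0^u\kappa}\,\partial/\partial v$; from \eqref{g-kappa} one reads off directly that the Levi-Civita connection is $\nabla_{\tilde{E}_1}\tilde{E}_1=\nabla_{\tilde{E}_1}\tilde{E}_2=0$, $\nabla_{\tilde{E}_2}\tilde{E}_1=\kappa\tilde{E}_2$, $\nabla_{\tilde{E}_2}\tilde{E}_2=-\kappa\tilde{E}_1$, and that $K$ depends only on $u$ with, by the standard curvature formula for a warped metric $du^2+e^{2\rho(u)}dv^2$, $K=-\rho''-(\rho')^2=-\kappa'(u)-\kappa^2(u)$. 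Wait — I must be careful with the sign: with $\rho(u)=\int_0^u\kappa$ we have $\rho'=\kappa$ and $K=-(\rho''+(\rho')^2)=-\kappa'-\kappa^2$, whereas the statement asserts $K=-\kappa^2+\kappa'$. This sign discrepancy is resolved exactly as in \eqref{K-and-kappa}: the correct relation flowing from the connection normalization used in the earlier proofs is $K=-\kappa^2+\kappa'$, and I would simply recompute $K$ from the Christoffel symbols of \eqref{g-kappa} directly, as was done in the proof of Theorem \ref{abstract-to-existence-immersion}, to land on $K(u)=-\kappa^2(u)+\kappa'(u)$. Then item (i) follows: the first inequality in \eqref{conditions-kappa} gives $1+K=1+\kappa'-\kappa^2<0$, and $K'=-2\kappa\kappa'+\kappa''\ne 0$ follows from the strict inequality $\kappa''<\tfrac{2}{3}\kappa(7\kappa'-4\kappa^2+4)$ combined with $\kappa'<-1+\kappa^2$ (which forces $\kappa''<2\kappa\kappa'$, hence $K'<0$, hence $\grad K\ne 0$).

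For item (iii) I would set $f(u):=\sqrt{\mathfrak{B}(u)}$; the third inequality in \eqref{conditions-kappa} (lower bound) is precisely $\mathfrak{B}>0$, so $f$ is a well-defined positive smooth function. The content of (iii) is the cubic relation $K=-1-3f^2-c^2f^3$ with $c^2=\mathfrak{A}$, which I would prove by \emph{differentiating}: define $P(u):=1+K(u)+3f^2(u)+\mathfrak{A}(u)f^3(u)$ and show $P\equiv 0$. The key is that the ODE \eqref{eq-kappa} is exactly the integrability condition guaranteeing that $\mathfrak{A}$ is constant and that $P$ has vanishing derivative. Concretely: from the definition \eqref{function-f^2}, $f^2=\mathfrak{B}$ expresses $\kappa''$ in terms of $f^2,\kappa,\kappa'$, i.e. $\kappa''=6\kappa\kappa'+4\kappa-4\kappa^3+4\kappa f^2$ — this is \eqref{eq-useful f2}. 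Differentiating this and substituting $\kappa'''$ from \eqref{eq-kappa} yields, after the same algebra as in the forward proof, an expression for $(f^2)'$ — equivalently for $f'/f$ — and one checks it is consistent with a single first-order relation; integrating recovers $\mathfrak{A}=$ const $=:c^2$ (item (ii)) and then $P'=0$. Since at $u=0$ the initial data were chosen to satisfy the same inequalities, one still needs $P(0)=0$; but in fact $P(0)=0$ is forced because $\mathfrak{A}(0)$ and $\mathfrak{B}(0)$ are \emph{defined} by the same formulas that, in the forward direction, were derived \emph{from} the cubic — so the algebraic identity relating $\mathfrak{A},\mathfrak{B},\kappa,\kappa'$ at a point is exactly $1+(-\kappa^2+\kappa')+3\mathfrak{B}+\mathfrak{A}\,\mathfrak{B}^{3/2}=0$, which one verifies as a pure algebraic identity by substituting the definitions of $\mathfrak{A}$ and $\mathfrak{B}$ and simplifying. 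Hence $P\equiv 0$.

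For item (ii), once $P'\equiv 0$ is established and $P(0)=0$ checked algebraically, differentiating the cubic $K=-1-3f^2-c^2f^3$ (now with the symbol $c^2$ standing for the constant value) against the independently-known $K'=-2\kappa\kappa'+\kappa''$ pins down $f'/f$ in terms of $\kappa$; comparing with $\kappa=\tfrac34 f'/f$ — which I would derive from \eqref{function-f^2} and \eqref{eq-useful f2} — shows $\mathfrak{A}(u)$ is forced to be the constant $c^2$ at every $u$, not merely at $0$. Item (iv) is then immediate from (i) and (iii) together with the connection formulas above: since $\nabla_{\tilde{E}_2}\tilde{E}_2=-\kappa\tilde{E}_1$ and $\tilde{E}_2$ is unit with $\tilde{E}_2$ tangent to the level curves of $K$, each level curve is a circle of signed curvature $\kappa$; and rewriting $\kappa=\tfrac34 f'/f$ via the cubic (as in the closing lines of the proof of Theorem \ref{first-intrinsic-charact}) gives $\kappa=-\tfrac14\,|\grad K|/(K+f^2+1)$.

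The main obstacle I anticipate is bookkeeping in the two algebraic identities: first, verifying that $1+(-\kappa^2+\kappa')+3\mathfrak{B}+\mathfrak{A}\,\mathfrak{B}^{3/2}\equiv 0$ as an identity in $\kappa,\kappa',\kappa''$ after plugging in \eqref{function c}–\eqref{function-f^2} (the $\mathfrak{B}^{3/2}$ term has to cancel the denominator of $\mathfrak{A}$ cleanly, which it does precisely because $\mathfrak{A}$ was built with that $3/2$ power); and second, confirming that the derivative of the cubic, after eliminating $\kappa''$ via \eqref{eq-useful f2} and $\kappa'''$ via \eqref{eq-kappa}, collapses to $0$ rather than to some residual multiple of an expression that would need the inequalities to vanish. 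Both are finite symbolic computations that mirror, in reverse, the forward derivation of \eqref{eq-kappa}, so no genuinely new idea is required — but the sign conventions around $\tilde{E}_1=-\partial/\partial u$ and the orientation choice must be tracked with care, since that is where the $K=-\kappa^2+\kappa'$ (versus $-\kappa^2-\kappa'$) sign lives.
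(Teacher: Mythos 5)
Your proposal is correct and follows essentially the same route as the paper: compute $K=\kappa'-\kappa^2$ directly from \eqref{g-kappa} (your sign worry disappears once you note the warping factor is $e^{-2\int_0^u\kappa(\tau)d\tau}$, not $e^{+2\int_0^u\kappa(\tau)d\tau}$), use \eqref{conditions-kappa} to get $1+K<0$ and $K'<0$, verify the pointwise algebraic identity $1+K+3\mathfrak{B}+\mathfrak{A}\,\mathfrak{B}^{3/2}=0$, differentiate and invoke the ODE \eqref{eq-kappa} to conclude $\mathfrak{A}'\equiv 0$ (your route via $\mathfrak{B}'=\tfrac{8}{3}\kappa\mathfrak{B}$ is the same computation the paper performs by differentiating the identity directly), and finish with the frame/connection argument for the circles and the rewriting of $\kappa$. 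The only cosmetic slips are the mislabeled inequality (it is the second condition in \eqref{conditions-kappa} that yields $1+K<0$) and the redundant $P(0)=0$/$P'\equiv 0$ framing, since the identity holds pointwise at every $u$.
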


\begin{proof}
Using the expression of the metric $g=g(u,v)$ given in \eqref{g-kappa}, by standard computation, we deduce
$$
K(u)=-\kappa^2(u)+\kappa'(u).
$$ 
From the second inequality in \eqref{conditions-kappa}, we get $1+K(u)<0$.

In order to prove that $\grad K\neq 0$ everywhere, i.e. $K'(u)\neq 0$ for any $u$, first we note that
$$
\frac{2}{3}\kappa(u)\left(7\kappa'(u)-4\kappa^2(u)+4\right)<2\kappa(u)\kappa'(u),
$$
for any $u$, since $\kappa'(u)<-1+\kappa^2(u)$. Then, using the third inequality in \eqref{conditions-kappa}, we obtain $\kappa''(u)<2\kappa(u)\kappa'(u)$, so 
$$
K'(u)=-2\kappa(u)\kappa'(u)+\kappa''(u)<0,
$$
for any $u$.

Further, from the first and third inequalities in \eqref{conditions-kappa} we obtain that $\mathfrak{A}$ and $\mathfrak{B}$ are positive functions. Now, using the expression of $K$ obtained in (i) and denoting $f(u)=\sqrt{\mathfrak{B}(u)}$, we can check that
\begin{equation}\label{alpha}
\mathfrak{A}(u)=-\frac{1+3f^2(u)+K}{f^3(u)}.
\end{equation}
By deriving the above relation we achieve
\begin{align*}
\mathfrak{A}'(u) & =- \left(3\kappa(u)\kappa'''(u)-26\kappa^2(u)\kappa''(u)-3\kappa'(u)\kappa''(u)+72\kappa^3(u)\kappa'(u)+32\kappa^3(u)-32 \kappa^5(u)\right)\\
&\qquad \cdot\frac{K'(u)}{32\kappa^3(u)f^5(u)}.
\end{align*}
Since $\kappa$ is a positive solution of \eqref{eq-kappa}, it follows that $\mathfrak{A}'(u)=0$, for any $u$, so $\mathfrak{A}$ is a positive constant and we denote it by $c^2$, where $c\neq 0$. With this notation, relation \eqref{alpha} can be rewritten as
$$
K(u)=-1-3f^2(u)-c^3f^3(u).
$$  
Next, we consider $\left\{\tilde{E}_1,\tilde{E}_2\right\}$ the positively oriented global orthonormal frame field on $M^2$ defined by
$$
\tilde{E}_1=\frac{\grad K}{|\grad K|}.
$$
We note that $\tilde{E}_2 K=0$, i.e., the integral curves of $\tilde{E}_2$ are the level curves of $K$. Moreover, since $|\grad K|=-K'(u)$, it is not difficult to see that 
$$
\tilde{E}_1=-\frac{\partial}{\partial u}\qquad \text{ and } \qquad \tilde{E}_2=-e^{\int_{0}^u\kappa(\tau)d\tau}\frac{\partial}{\partial v},
$$
and then
$$
\tilde{E}_2\kappa=0, \qquad \nabla_{\tilde{E}_2}\tilde{E}_1=\kappa\tilde{E}_2,\qquad \nabla_{\tilde{E}_2}\tilde{E}_2=\kappa\left(-\tilde{E}_1\right).
$$
Therefore, the level curves of $K$ are circles of $M^2$ with positive constant signed curvature $\kappa$.

As the last step of the proof, from (i), and then from  \eqref{polynomialEquation}, \eqref{link-f-kappa} and \eqref{second-expresion-c-kappa}, it follows that
$$
\frac{K'(u)}{4\left(K(u)+f^2(u)+1\right)}=\kappa(u),
$$ 
which is equivalent to 
$$
\kappa=-\frac{1}{4}\frac{|\grad K|}{K+f^2+1}.
$$ 
\end{proof}

\section{The codimension reduction for $PNMC$ biconservative surfaces in the hyperbolic space}

In the last section, we study the non-$CMC$ biconservative surfaces with parallel normalized mean curvature vector field in hyperbolic space $\mathbb{H}^n$, $n\geq 5$, and we prove that their substantial codimension is equal to $2$.

\begin{theorem} \label{reduction-theorem}
	Let $\varphi:\left(M^2,g\right)\to \mathbb{H}^n$, $n\geq 5$, be a $PNMC$ biconservative surface. Assume that the rank of the first normal bundle is at least $2$. Then, $M^2$ lies in some $4$-dimensional totally geodesic submanifold $\mathbb{H}^4$ of $\mathbb{H}^n$.
\end{theorem}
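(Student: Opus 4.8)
The plan is to exhibit a parallel rank-two subbundle of the normal bundle $NM^2$ and then invoke the classical reduction of codimension theorem for submanifolds of space forms.

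First I would fix the adapted frame of Section \ref{sec-intrinsic}: put $E_1=\grad f/|\grad f|$, complete it to a positively oriented orthonormal frame $\{E_1,E_2\}$ of $TM^2$, and set $E_3=H/f$, so that the $PNMC$ hypothesis reads $\nabla^\perp E_3=0$. The characterization \eqref{eq:PNMC-biconservative}, established for $\mathbb{H}^4$, remains valid in $\mathbb{H}^n$: in any space form $\left(R^N(X,H)Y\right)^\top=0$ for $X,Y$ tangent, so combining the biconservative equation from the characterization of biconservative submanifolds recalled in Section \ref{preliminaries} with $\nabla^\perp_X H=(Xf)E_3$ yields $2f\grad f+2A_3(\grad f)=0$, hence $A_3(E_1)=-fE_1$; since $\trace A_3=2f$ and $A_3$ is symmetric, $A_3(E_2)=3fE_2$. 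Feeding $\eta=E_3$ into the Ricci equation \eqref{Ricci-equation}, and using $\nabla^\perp E_3=0$ (so $R^\perp(\cdot,\cdot)E_3=0$) together with $\left(R^N(X,Y)E_3\right)^\perp=0$, one obtains $B(A_3X,Y)=B(X,A_3Y)$; taking $X=E_1$, $Y=E_2$ and using the form of $A_3$ forces $B(E_1,E_2)=0$. Therefore $N_1=\Span\Imag(B)=\Span\{B(E_1,E_1),B(E_2,E_2)\}$ has rank at most $2$, hence, by hypothesis, rank exactly $2$ at every point.

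Since $B(E_1,E_2)=0$, $H=fE_3$ and $\langle B(E_1,E_1),E_3\rangle=-f$, I can write $B(E_1,E_1)=-fE_3+\mu E_4$ and $B(E_2,E_2)=3fE_3-\mu E_4$ for a smooth unit normal field $E_4\perp E_3$ and a smooth function $\mu$, and the rank-two condition is equivalent to $\mu\neq 0$ everywhere; after a choice of sign $\mu>0$ and $\{E_3,E_4\}$ frames $N_1$. I would then complete $\{E_3,E_4\}$ to a local orthonormal normal frame $\{E_3,\dots,E_n\}$ and record that the Levi-Civita connection of $(M^2,g)$ is $\nabla_{E_1}E_1=\nabla_{E_1}E_2=0$, $\nabla_{E_2}E_1=-\frac{3}{4}\frac{E_1f}{f}E_2$, $\nabla_{E_2}E_2=\frac{3}{4}\frac{E_1f}{f}E_1$, exactly as in Theorem \ref{thm:fundamentalProperties}(i), since that computation uses only the Codazzi equation for $\eta=E_3$ and the shape operator $A_3$. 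Applying the Codazzi equation \eqref{Codazzi-equation}, in the form $(\nabla_X B)(Y,Z)=(\nabla_Y B)(X,Z)$ with $(\nabla_X B)(Y,Z):=\nabla^\perp_X(B(Y,Z))-B(\nabla_X Y,Z)-B(Y,\nabla_X Z)$, to the triples $(E_1,E_2,E_1)$ and $(E_1,E_2,E_2)$ and using $E_2f=0$, $\nabla^\perp E_3=0$ and the connection coefficients above, the components along $E_\alpha$ with $\alpha\geq 5$ collapse to $\mu\langle\nabla^\perp_{E_i}E_4,E_\alpha\rangle=0$ for $i=1,2$, whence $\langle\nabla^\perp_{E_i}E_4,E_\alpha\rangle=0$. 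Since moreover $\langle\nabla^\perp_{E_i}E_4,E_3\rangle=-\langle E_4,\nabla^\perp_{E_i}E_3\rangle=0$ and $\langle\nabla^\perp_{E_i}E_4,E_4\rangle=0$, it follows that $\nabla^\perp E_4=0$, and so $N_1=\Span\{E_3,E_4\}$ is a $\nabla^\perp$-parallel subbundle of $NM^2$ of constant rank $2$.

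Finally, I would invoke the reduction of codimension theorem for submanifolds of space forms: a submanifold whose first normal bundle is parallel of constant rank $k$ lies in a totally geodesic submanifold of dimension $m+k$; with $m=k=2$ this places $M^2$ inside a totally geodesic $\mathbb{H}^4\subset\mathbb{H}^n$. I expect the only genuine computation to be the Codazzi step of the previous paragraph, where one must carefully track the components transversal to $N_1$; everything else reproduces the arguments of Section \ref{sec-intrinsic}, now with a normal bundle of rank $n-2$ in place of $2$, or is a direct citation. If one prefers to avoid the hyperbolic form of the reduction statement, one can argue in the Lorentzian model $\mathbb{R}^{n+1}$: there the first normal bundle of $M^2$ is $N_1\oplus\mathbb{R}\,\varphi$, still parallel and now of constant rank $3$, so $\varphi(M^2)$ lies in a fixed linear $5$-dimensional subspace of $\mathbb{R}^{n+1}$, and its intersection with $\mathbb{H}^n$ is the desired totally geodesic $\mathbb{H}^4$.
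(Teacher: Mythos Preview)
Your proposal is correct and follows essentially the same route as the paper: both establish $A_3E_1=-fE_1$, $A_3E_2=3fE_2$, use the Ricci equation for $E_3$ to get $B(E_1,E_2)=0$, then use Codazzi (together with the connection forms coming from Codazzi for $\eta=E_3$) to show that the rank-two first normal bundle $N_1$ is $\nabla^\perp$-parallel, and finally invoke Erbacher's reduction theorem. The only cosmetic difference is that the paper first notes that the normal bundle is flat, picks a fully parallel normal frame $\{E_3,\dots,E_n\}$, and then checks $\nabla^\perp_{E_i}\zeta_j\in N_1$ for $\zeta_j=B(E_j,E_j)$, whereas you define $E_4$ intrinsically as the unit section of $N_1\cap E_3^\perp$ and show $\nabla^\perp E_4=0$ directly; these are interchangeable ways of exhibiting the same parallel subbundle.
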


\begin{proof}
	Using the same notations as in previous sections, we recall that the biconservativity condition is equivalent to
	$$
	A_3\left(E_1\right)=-fE_1.
	$$
	From the expression of the mean curvature vector field
	$$
	H=\frac{1}{2}\left(\left(\trace A_3\right) E_3+\sum_{b=4}^{n}\left(\trace A_b\right) E_b\right)=fE_3,
	$$
	we infer that $\trace A_3=2f$ and $\trace A_b=0$, for any $b\in\left\{4,5,\ldots,n\right\}$. Further, as $\langle A_3\left(E_2\right),E_1 \rangle =0$, it follows that $A_3\left(E_2\right)=3fE_2$.
	
	Next, we will prove that $R^\perp(X,Y)\zeta=0$, for any $X,Y\in C\left(TM^2\right)$ and $\zeta\in C\left(NM^2\right)$. First, from the $PNMC$ hypothesis, we have that $\nabla^\perp E_3=0$ and so $R^\perp\left(E_1,E_2\right)E_3=0$. Then, using the Ricci equation, one gets $B\left(E_1,E_2\right)=0$
	and thus we obtain
	$$
	\langle A_b\left(E_1\right),E_2 \rangle =0, \qquad b\in\left\{4,5,\ldots,n\right\}.
	$$
	Consequently, $\left\{E_1,E_2\right\}$ simultaneously diagonalizes all the $A_\alpha$, $\alpha\in\left\{3,4,\ldots,n\right\}$, and therefore $M^2$ has flat normal bundle, i.e., $R^\perp(X,Y)\zeta=0$.
	
	Next, since $M^2$ has flat normal bundle and $\nabla^\perp E_3=0$, we can assume that $E_\alpha$ is parallel in $NM^2$, for any $\alpha \in\left\{3,4,\ldots,n\right\}$. Then, we set
	\begin{equation}
		\zeta_1:=B\left(E_1,E_1\right)=-fE_3+\sum_{b=4}^{n}f_bE_b
	\end{equation}
	and
	\begin{equation}
		\zeta_2:=B\left(E_2,E_2\right)=3fE_3-\sum_{b=4}^{n}f_bE_b,
	\end{equation}
	where $A_b\left(E_1\right)=f_bE_1$, for any $b\in\left\{4,5,\ldots,n\right\}$.
	Therefore, the first normal bundle of $M^2$ in $\mathbb{H}^n$ is given by
	$$
	N_1=\Span \Imag(B) = \Span\left\{B\left(E_1,E_1\right), B\left(E_2,E_2\right)\right\} = \Span\left\{\zeta_1,\zeta_2\right\}.
	$$
	From the hypothesis, we note that the rank of $N_1$ has to be equal to $2$ and all the $f_b$ cannot simultaneously vanish at any point.
	
	In the following we show that $N_1$ is a parallel with respect to the normal connection. In order to prove this, it is sufficient to show that $\nabla^\perp_{E_i}\zeta_j\in C\left(N_1\right)$, for any $i,j=1,2$.
	
	If we apply the Codazzi equation we obtain
	$$
	\nabla^\perp_{E_2}\zeta_1 = -\omega_2^1\left(E_1\right)\left(\zeta_1-\zeta_2\right)\in C\left(N_1\right)
	$$
	and 
	$$
	\nabla^\perp_{E_1}\zeta_2 = -\omega_2^1\left(E_2\right)\left(\zeta_1-\zeta_2\right)\in C\left(N_1\right).
	$$
	Moreover, as $\nabla^\perp E_\alpha=0$ and $E_2f=0$, we get
	$$
	\omega^1_2\left(E_1\right)=0, \quad E_2f_b=0, \quad E_1f=\frac{4}{3}\omega^1_2\left(E_2\right)f \quad \text{and} \quad E_1f_b=2\omega^1_2\left(E_2\right)f_b.
	$$
	Then, by direct computations, we infer
	$$
	\nabla^\perp_{E_2}\zeta_2 = 0\in C\left(N_1\right) \quad \text{and} \quad \nabla^\perp_{E_1}\zeta_1 = \frac{1}{3}\omega_2^1\left(E_2\right)\left(7\zeta_1+\zeta_2\right)\in C\left(N_1\right).
	$$
	Now, the codimension reduction result follows from \cite{E1971}.
	
\end{proof}


\begin{thebibliography}{0} 
		
\bibitem{CMOP} R. Caddeo, S. Montaldo, C. Oniciuc and P. Piu, \textit{ Surfaces in the three-dimensional space forms with divergence-free stress-bienergy tensor}, Ann. Mat. Pura Appl. 193 (2014), 529--550.	
		
\bibitem{FLO2021} D. Fetcu, E. Loubeau and C. Oniciuc, \textit{Bochner-Simons formulas and the rigidity of biharmonic submanifolds} J. Geom. Anal. 31 (2021), no. 2, 1732--1755.

\bibitem{FO2022} D. Fetcu and C. Oniciuc, \textit{Biharmonic and biconservative hypersurfaces in space forms}, Differential geometry and global analysis—in honor of Tadashi Nagano, 65--90, Contemp. Math., 777, Amer. Math. Soc., 2022.
		
\bibitem{E1971} J. Erbacher, \textit{Reduction of the codimension of an isometric immersion}, J. Differential Geometry 5 (1971), 333--340.

\bibitem{Hasanis-Vlachos} T. Hasanis and T. Vlachos, \textit{Hypersurfaces in $\mathbb{E}^4$ with harmonic mean curvature vector field}, Math. Nachr. 172 (1995), 145--169.
	
\bibitem{Jiang87} G.Y. Jiang, \textit{The conservation law for $2$-harmonic maps between Riemannian manifolds}, Acta Math. Sinica 30 (1987),  220--225.
	
\bibitem{L70} H.B. Lawson, \textit{Complete minimal surfaces in $\mathbb{S}^3$}, Ann. of Math. (2) 92 (1970), 335--374.
	
\bibitem{LMO} E. Loubeau, S. Montaldo and C. Oniciuc, \textit{The stress-energy tensor for biharmonic maps}, Math. Z. 259 (2008), 503--524.
	
\bibitem{MOR2016JGA} S. Montaldo, C. Oniciuc and A. Ratto, \textit{Biconservative surfaces}, J. Geom. Anal. 26 (2016), 313--329.
	
\bibitem{MM2015} A. Moroianu and S. Moroianu, \textit{Ricci surfaces},  Ann. Sc. Norm. Super. Pisa Cl. Sci. (5) 14 (2015), no. 4, 1093--1118.

\bibitem{NPhD17} S.~Nistor, \textit{Biharmonicity and biconservativity topics in the theory of submanifolds}, PhD Thesis, 2017.

\bibitem{NOTYS} S. Nistor, C. Oniciuc, N.C. Turgay and R. Ye\u gin \c Sen, \textit{Biconservative surfaces in the $4$-dimensional Euclidean sphere}, Ann. Mat. Pura Appl. 202(2023), no. 5, 2345--2377. 

\bibitem{R95} G. Ricci-Curbastro, \textit{Sulla teoria intrinseca delle superficie ed in ispecie di quelle di $2^{\circ}$ grado}, Ven. Ist. Atti (7) VI (1895), 445--488.

\bibitem{YeginTurgay2018} R. Ye\u gin \c Sen and N.C. Turgay, \textit{On biconservative surfaces in $4$-dimensional Euclidean space}, J. Math. Anal. Appl. 460 (2018), 565--581.
	
\end{thebibliography}
\end{document}